 \newtheorem{theorem}{Theorem}[section]
 \newtheorem{proposition}[theorem]{Proposition}
 \newtheorem{lemma}[theorem]{Lemma}
\theoremstyle{definition}
 \newtheorem{remark}[theorem]{Remark}}
\theoremstyle{definition}
  \newtheorem{definition}[theorem]{Definition}}
\theoremstyle{definition}
 \newtheorem{example}[theorem]{Example}} 
\theoremstyle{definition}
\numberwithin{equation}{section}
 \newcommand{\LO}{\textrm{LO}}
 \newcommand{\Z}{\mathbb{Z}}
 \newcommand{\mF}{\mathcal{F}}
 \newcommand{\Conv}{\mathsf{Conv}}
\title{Isolated orderings on amalgamated free products}
\author{Tetsuya Ito}
\address{Research Institute for Mathematical Sciences, Kyoto university
Kyoto, 606-8502, Japan}
\email{tetitoh@kurims.kyoto-u.ac.jp}
\urladdr{http://www.kurims.kyoto-u.ac.jp/~tetitoh/}
\subjclass[2010]{Primary~20F60 
, Secondary~06F15}
\keywords{Orderable groups, isolated ordering, space of left orderings}
\begin{document}
\maketitle

\begin{abstract}
We show that an amalgamated free product $G*_{A}H$ admits a discrete isolated ordering, under some assumptions of $G,H$ and $A$. This generalizes the author's previous construction of isolated orderings, and unlike known constructions of isolated orderings, can produce an isolated ordering with many non-trivial proper convex subgroups.
\end{abstract}

\section{Introduction}

A total ordering $<_{G}$ of a group $G$ is a \emph{left-ordering} if the relation $<_{G}$ is preserved by the left action of $G$ itself, namely, $a<_{G} b$ implies $ga <_{G} gb$ for all $a,b,g \in G$. A group admitting a left-ordering is called \emph{left-orderable}.

For $g \in G$, let $U_{g}$ be the set of left-orderings $<_{G}$ of $G$ that satisfy $1<_{G} g$. The set of all left-orderings of $G$ can be equipped with a topology so that $\{U_{g}\}_{g \in G}$ is an open sub-basis. We denote the resulting topological space by $\LO(G)$ and call the \emph{space of left-orderings of $G$} \cite{si}.

An \emph{isolated ordering} is a left ordering which is an isolated point in $\LO(G)$. A left-ordering $<_{G}$ is isolated if and only if $<_{G}$ is determined by the sign of finitely many elements. That is, $<_{G}$ is isolated if and only if there exists a finite subset $\{g_1,\ldots,g_{n}\}$ of $G$ such that $\bigcap_{i=1}^{n} U_{g_i} =\{<_{G}\}$. We call such a finite subset a \emph{characteristic positive set} of $<_{G}$. 
In particular, if the positive cone $P(<_{G})$ of a left ordering $<_{G}$, the sub semi-group of $G$ consisting of $<_{G}$-positive elements, is finitely generated then $<_{G}$ is isolated.

An isolated ordering $<_{G}$ of $G$ is \emph{genuine} if $\LO(G)$ contains non-isolated points. This is equivalent to saying that $\LO(G)$ is not a finite set. Since the classification of groups admitting only finitely many left-orderings (non-genuine isolated orderings) is known (see \cite[Theorem 5.2.1]{km}), we concentrate our attention to genuine isolated orderings. 
Several classes of groups do not have genuine isolated ordering.
The non-existence of isolated orderings are observed for the free abelian groups of rank $>1$ \cite{si} and the free groups of rank $>1$ \cite{n1}. More generally, by using a dynamical realizations, it is shown that the free products of more than one groups \cite{ri}, and virtually solvable groups \cite{rt} never admit a genuine isolated ordering.

Recent developments provide several examples of genuine isolated orderings, but our catalogues and knowledge are still limited and it is still hard to predict when a left-orderable group admits an isolated ordering. At present, we have three ways of constructing (genuine) isolated orderings; Dehornoy-like orderings \cite{i1,n2}, partially central cyclic amalgamation \cite{i2}, and triangular presentations with certain special elements \cite{de}. 

The aim of this paper is to extend a partially central cyclic amalgamated product construction of isolated orderings \cite{i2} in more general and abstract settings. 

To state main theorem, we introduce the following two notions.
Let $A$ be a subgroup of a left-orderable group $G$.
First we extend the notion of isolatedness in a relative setting.

\begin{definition} 
Let $\textsf{Res}:LO(G)\rightarrow LO(A)$ be the continuous map induced by the restriction of left orderings of $G$ on $A$. We say a left ordering $<_{G}$ of $G$  is \emph{relatively isolated} with respect to $A$ if $<_{G}$ is an isolated point in the subspace $\textsf{Res}^{-1}(\textsf{Res} (<_{G})) \subset \LO(G)$. Thus, $<_{G}$ is relatively isolated if and only if there exists a finite subset $\{g_1,\ldots,g_{n}\}$ of $G$ such that $\textsf{Res}^{-1}(\textsf{Res} (<_{G})) \cap \bigcap U_{g_i} =\{<_{G}\}$. We say such a finite set a \emph{characteristic positive set} of $<_{G}$ relative to $A$.
\end{definition}

The next property plays a crucial role in our construction of isolated orderings. 

\begin{definition}
We say that a subgroup $A$ is a \emph{stepping} with respect to a left-ordering $<_{G}$ of $G$ if for each $g \in G$ both the maximal and the minimal
\begin{gather*}
\begin{cases}
a(g)= \max_{<_{G}}\{a \in A \: | \: a\leq_{G} g\} \\  
a_{+}(g)= \min_{<_{G}}\{a \in A \: | \: g <_{G} a\}.
\end{cases}
\end{gather*}
always exist. 
\end{definition}

Using these notions our main theorem is stated as follows.
Here is a situation we consider. Let $A$, $G$ and $H$ be left-orderable groups. 
We fix embeddings $i_{G}: A \hookrightarrow G$ and $i_{H}: A \hookrightarrow H$ so we always regard $A$ as a common subgroup of $G$ and $H$.

\begin{theorem}
\label{theorem:main}
Let $<_{G}$ and $<_{H}$ be discrete orderings of $G$ and $H$. 
Assume that $<_{G}$ and $<_{H}$ satisfy the following conditions.
\begin{enumerate}
\item[(a)] The restriction of $<_{G}$ and $<_{H}$ on $A$ yields the same left ordering $<_{A}$ of $A$.
\item[(b)] $A$ is a stepping with respect to both $<_{G}$ and $<_{H}$.
\item[(c)] $<_{G}$ is isolated and $<_{H}$ is relatively isolated with respect to $A$.
\end{enumerate}
Then the amalgamated free product $X=G *_{A} H$ admits isolated orderings $<_{X}^{(1)}$ and $<_{X}^{(2)}$ which have the following properties:

\begin{enumerate}
\item Both $<_{X}^{(1)}$ and $<_{X}^{(2)}$ extend the orderings $<_{G}$ and $<_{H}$: if $g <_{G} g'$ $(g,g' \in G)$ then $ g <_{X}^{(i)} g'$, and if $h <_{H} h'$ $(h,h' \in H)$ then $ h<_{X}^{(i)} h'$ $(i=1,2)$. 
\item If $\{g_{1},\ldots,g_m\}$ is a characteristic positive set of $<_{G}$ and $\{h_{1},\ldots,h_n\}$ is a characteristic positive set of $<_{H}$ relative to $A$, then 
\[\{g_1,\ldots,g_m, h_1,\ldots,h_n, h_{\sf min}a_{\sf min}^{-1}g_{\sf min}\}\]
 is a characteristic positive set of $<_{X}^{(1)}$ and
\[\{g_1,\ldots,g_m, h_1,\ldots,h_n, g_{\sf min}a_{\sf min}^{-1}h_{\sf min}\} \]
is a characteristic positive set of $<_{X}^{(2)}$.
 Here $a_{\sf min}$, $g_{\sf min}$ and $h_{\sf min}$ represent the minimal positive elements of the orderings $<_{A}$, $<_{G}$ and $<_{H}$, respectively. 
(Note that $A$ is a stepping implies that $<_{A}$ is discrete, see Lemma \ref{lemma:discA}).
\item $<_{X}^{(1)}$ is discrete with the minimal positive element $h_{\sf min}a_{\sf min}^{-1}g_{\sf min}$, and $<_{X}^{(2)}$ is discrete with the minimal positive element $g_{\sf min}a_{\sf min}^{-1}h_{\sf min}$.
\item $A$ is a stepping with respect to the orderings $<_{X}^{(1)}$ and $<_{X}^{(2)}$.
\end{enumerate}

\end{theorem}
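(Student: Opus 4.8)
The plan is to build $<_X^{(1)}$ and $<_X^{(2)}$ explicitly from normal forms in $X=G*_AH$, and then check the four conclusions one at a time. First I would extract the structural content of the hypotheses that actually gets used: by Lemma~\ref{lemma:discA} the common restriction $<_A$ is discrete with least positive element $a_{\sf min}$, and the mere existence of $a(g)$ and $a_+(g)$ for every $g$ forces $A$ to be cofinal in $(G,<_G)$ in both directions with $a_+(g)=a(g)\,a_{\sf min}$, so that every $g\in G$ has a unique ``integer part'' $a(g)\in A$ and ``fractional part'' $g_0:=a(g)^{-1}g$ lying in the interval $G_0:=\{g'\in G\mid 1\le_G g'<_G a_{\sf min}\}$, and similarly in $H$. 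The sets $G_0$ and $H_0$ meet only in $1$ (an element of $G_0\cap H_0$ lies in $A$ and in the interval $[1,a_{\sf min})$ of $<_A$), and it is precisely the two ways of interleaving $G_0$ and $H_0$ around $1$ that will produce the two orderings.

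Next I would define $<_X^{(1)}$ by a lexicographic-type rule on reduced words: given $x\in X\setminus A$, peel syllables off using the integer/fractional part decomposition, compare the resulting ``$A$-positions'' using $<_G$ on $G$-syllables and $<_H$ on $H$-syllables, and break a tie that occurs inside a single interval $[a,aa_{\sf min})$ by decreeing $h_{\sf min}a_{\sf min}^{-1}g_{\sf min}>_X1$ (this is the one bit of extra data; for $<_X^{(2)}$ one decrees $g_{\sf min}a_{\sf min}^{-1}h_{\sf min}>_X1$ instead, interleaving $G_0,H_0$ the other way). Equivalently one expects the positive cone $P^{(1)}$ to be the subsemigroup of $X$ generated by $P(<_G)$, $P(<_H)$ and $h_{\sf min}a_{\sf min}^{-1}g_{\sf min}$; I would keep both descriptions available, using the word rule for the constructions and the generating description as a cross-check, but whichever is taken as the definition the work is the same.

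The heart of the argument is then to verify that $<_X^{(1)}$ is a genuine left-ordering. Totality and left-invariance are immediate from the word rule, so the real point is transitivity, i.e.\ $P^{(1)}\cdot P^{(1)}\subseteq P^{(1)}$, and this is where I expect the main obstacle: multiplying two reduced words can cause cancellation across the amalgamated subgroup, and one must check that the ``position'' comparison behaves correctly under every cancellation pattern. Here the stepping hypothesis does the work: after cancellation the surviving middle of the product sits in a single interval $[a,aa_{\sf min})$, so one is reduced to a finite case analysis inside $G_0\cup H_0$ governed only by the single interleaving decree. Granting this, conclusion~(1) holds by construction; conclusion~(3) follows by reading off from the word rule that nothing lies strictly between $1$ and $h_{\sf min}a_{\sf min}^{-1}g_{\sf min}$ --- note $h_{\sf min}a_{\sf min}^{-1}g_{\sf min}<_Xg_{\sf min}$ because $h_{\sf min}a_{\sf min}^{-1}<_H1$, and likewise $h_{\sf min}a_{\sf min}^{-1}g_{\sf min}<_Xh_{\sf min}$, so it lies below the least positive elements inherited from $G$ and $H$ --- and conclusion~(4) follows because every $x\in X$ again lies in a unique interval $[a,aa_{\sf min})$ for $<_X^{(i)}$, the relevant floors being computed from the floors in $G$ and $H$ on the peeled word.

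Finally, for conclusion~(2), let $<$ be a left-ordering of $X$ whose positive cone contains the displayed set. Since $\{g_1,\dots,g_m\}$ is characteristic for $<_G$, the restriction $<|_G$ equals $<_G$, hence $<|_A=<_A$; since $\{h_1,\dots,h_n\}$ is characteristic for $<_H$ relative to $A$ and $<|_A=<_A$, the restriction $<|_H$ equals $<_H$; and $h_{\sf min}a_{\sf min}^{-1}g_{\sf min}>_X1$ fixes the interleaving. So $<$ and $<_X^{(1)}$ agree on $G\cup H$ and on the seam element, and it remains to propagate this agreement to all of $X$: for an arbitrary $x$, running the same syllable-peeling as in the construction expresses the sign of $x$ through a finite sequence of comparisons, each between $1$ and an element whose $<$-sign is already determined --- an element of $G$, of $H$, or a translate of $h_{\sf min}a_{\sf min}^{-1}g_{\sf min}$ produced by the stepping reduction --- so the $<$-sign of $x$ is forced to equal its $<_X^{(1)}$-sign, whence $<$ coincides with $<_X^{(1)}$. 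The case of $<_X^{(2)}$ is identical with the two interleavings exchanged. In summary, the two places where care is needed are the cancellation analysis in the left-invariance step and the matching claim in~(2) that the peeling reduction always terminates in already-determined signs; both are handled by using the stepping property to confine everything, after reduction, to the bounded sets $G_0$ and $H_0$.
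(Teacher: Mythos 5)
Your outline identifies the right ingredients --- the decomposition $g=a(g)\cdot(a(g)^{-1}g)$ coming from the stepping hypothesis, the single ``seam'' inequality $1<h_{\sf min}a_{\sf min}^{-1}g_{\sf min}$ as the only extra datum, and the uniqueness argument for conclusion (2) --- and that last argument is essentially the one the paper uses. But as a proof it has a genuine gap at its center: everything is made to rest on a ``lexicographic-type rule on reduced words'' whose two hard points you explicitly defer (``Granting this\dots''). First, the rule is not shown to be well defined: a syllable decomposition in $G*_AH$ is only unique up to shuffling elements of $A$ between adjacent syllables, so your comparison of ``$A$-positions'' syllable by syllable needs an invariance check; this is exactly what Lemmas \ref{lemma:key0}, \ref{lemma:c_0} and \ref{lemma:pp1} of the paper supply. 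Second, the assertion that after cancellation ``the surviving middle of the product sits in a single interval $[a,aa_{\sf min})$'' is the entire technical content of the theorem --- in the paper it is the Ping-Pong property {\bf [P1]}--{\bf [P3]}, proved by induction along the filtration $\mF_{i}(X)$ in Propositions \ref{prop:extbto1}, \ref{prop:ext1to2} and \ref{prop:proof} --- and it is not something one can grant: it is precisely where the hypotheses (b) and (c) are consumed. The paper avoids defining a global rule on reduced words altogether; instead it extends a compatible order one filtration step at a time and proves that each extension is \emph{forced}, which simultaneously yields existence, left-invariance, and isolation.

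Two smaller but real problems. Your ``equivalent'' description of the positive cone as the subsemigroup generated by $P(<_G)$, $P(<_H)$ and $h_{\sf min}a_{\sf min}^{-1}g_{\sf min}$ is nowhere justified, and the paper explicitly leaves the structure of the positive cone open (it does not even know a generating set in general); a characteristic positive set determines the ordering among all left-orderings but does not generate its cone, so this cannot be used even as a cross-check. And in conclusion (3), the step ``$h_{\sf min}a_{\sf min}^{-1}g_{\sf min}<_Xg_{\sf min}$ because $h_{\sf min}a_{\sf min}^{-1}<_H1$'' uses right-invariance, which a left-ordering does not have; in the paper this inequality is instead read off from the explicit position of $h_{\sf M}^{-1}g_{\sf min}$ in the constructed order $<_2$ (it is the successor of $1$). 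So the skeleton is compatible with the paper's proof, but the load-bearing steps are missing.
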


The assumption (a) is an obvious requirement for $X$ to have a left ordering  extending both $<_{G}$ and $<_{H}$. The crucial assumptions are (b) and (c). It should be emphasized that the orderings $<_{A}$ and $<_{H}$ may not be isolated. 

We also note that, The property (4) allows us to iterate a similar construction, hence Theorem \ref{theorem:main} produces huge examples of isolated orderings.

\begin{remark}
As for the existence of isolated orderings, Theorem \ref{theorem:main} contains the main theorem of \cite{i2} as a special case, but \cite[Theorem 1.1]{i2} contains much stronger results.

In \cite{i2}, we treated the case that $A=\Z$ with additional assumptions that the isolated ordering $<_{H}$ is preserved by the right action of $A$, and that $A$ is central in $G$. Under these assumptions, we proved that the positive cone of the resulting isolated ordering is \emph{finitely generated}, and determined all convex subgroups. Moreover, one can algorithmically determine whether $x<_{X} x'$ or not.

On the other hand, for the isolated orderings $<_{X}^{(i)}$ in Theorem \ref{theorem:main}, we do not know whether its positive cone is finitely generated or not in general, and a computation of $<_{X}^{(i)}$ is more complicated. As for the computational issues, see Remark \ref{rem:computation}.
\end{remark}

In light of the above remark, finding a generating set of the positive cone of $<_{X}^{(i)}$, and determining when it is finitely generated are quite interesting.

As for convex subgroups, in Proposition \ref{prop:convex} we show that a  convex subgroup of $A$ with additional properties yields a convex subgroup of $(X,<_{X})$. Thus, the resulting isolated ordering of $X$ can admit many non-trivial convex subgroups. This also makes a sharp contrast in \cite{i2}, where the obtained isolated ordering contains exactly one non-trivial proper convex subgroup. 
It should be emphasized that the Dubrovina-Dubrovin ordering of the braid groups \cite{dd} are the only known examples of genuine isolated ordering with more than one proper non-trivial convex subgroup. In Example \ref{exam:convex}, starting from $\Z$ with standard ordering, the simplest isolated ordering, we construct many isolated orderings with more than one non-trivial convex subgroups.

\section*{Acknowledgement}

The author was partially supported by the Grant-in-Aid for Research Activity start-up, Grant Number 25887030.

\section{Construction of isolated orderings}

Let $(S,<_{S})$ be a totally ordered set. For $s,s' \in S$, we say $s'$ is the \emph{successor} of $s$ and denote by $s \prec_{S} s'$, if $s'$ is the minimal element in $S$ that is strictly greater than $s$ with respect to the ordering $<_{S}$.

A left ordering $<_{G}$ of a group $G$ is \emph{discrete} if there exists the successor $g_{\sf min}$ of the identity element. That is, $<_{G}$ admits the minimal $<_{G}$-positive element. By left-invariance, a discrete left ordering $<_{G}$ satisfy 
$g g_{\sf min}^{-1} \prec_{G} g \prec_{G} gg_{\sf min}$ for all $g \in G$.

Let us consider the situation in Theorem \ref{theorem:main}: Let $G$ and $H$ be groups admitting discrete left orderings $<_{G}$ and $<_{H}$, and $A$ be a common subgroup of $G$ and $H$, such that the restriction of $<_{G}$ and $<_{H}$ yield the same left ordering $<_{A}$ .

The assumption that $A$ is a stepping (assumption (b)) implies the following.
\begin{lemma}
\label{lemma:discA}
For a subgroup $A$ of a left-orderable group $G$, if $A$ is a stepping with respect to a left-ordering $<_{G}$, then the restriction of $<_{G}$ on $A$ is discrete.
\end{lemma}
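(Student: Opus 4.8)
The plan is to extract the minimal positive element of $(A,<_{G}|_{A})$ directly from the stepping hypothesis, specialized at the identity. Concretely, I would apply the definition of ``stepping'' to $g=1_{G}$. By hypothesis the element
\[
a_{+}(1)=\min{}_{<_{G}}\{a\in A \:|\: 1<_{G}a\}
\]
exists. The key (and only) observation is that the set $\{a\in A \:|\: 1<_{G}a\}$ is precisely the positive cone of the restricted ordering $<_{A}:=\,<_{G}|_{A}$: for $a\in A$ one has $1<_{A}a$ if and only if $1<_{G}a$, by the very definition of the restriction. Hence $<_{A}$ admits a least positive element, namely $a_{\sf min}:=a_{+}(1)$, which is by definition the successor of $1$ in $(A,<_{A})$; therefore $<_{A}$ is discrete. (Note that the companion quantity $a(1)=\max_{<_{G}}\{a\in A\:|\:a\leq_{G}1\}$ is automatically equal to $1$, so the content of the stepping hypothesis at $g=1$ is exactly the existence of $a_{+}(1)$.)

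The only point deserving a word of care is the degenerate case in which $\{a\in A\:|\:1<_{G}a\}$ is empty, i.e.\ $A$ is trivial. In that case there is no positive element and the notion of discrete ordering is vacuous; one either excludes $A=\{1\}$ or adopts the convention that the unique ordering of the trivial group is discrete. In every nondegenerate case the argument above applies verbatim, and if one wishes one may additionally record, by left-invariance, that $a\,a_{\sf min}^{-1}\prec_{A}a\prec_{A}a\,a_{\sf min}$ for all $a\in A$, mirroring the displayed identity stated for general discrete orderings just before the lemma.

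I do not expect any real obstacle here: the lemma is an immediate specialization of the hypothesis, and its purpose is simply to legitimize the notation $a_{\sf min}$ and the parenthetical remark in Theorem \ref{theorem:main}. Accordingly I would keep the written proof to a couple of lines.
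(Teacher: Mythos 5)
Your proof is correct and is essentially identical to the paper's: both specialize the stepping hypothesis to $g=1$ to obtain $a_{+}(1)=\min_{<_{G}}\{a\in A\mid 1<_{G}a\}$ and identify it as the least positive element of the restricted ordering. Your extra remark about the degenerate case $A=\{1\}$ is a reasonable footnote the paper omits, but it does not change the argument.
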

\begin{proof}
From the definition of stepping, 
\[ a_{\sf min} = \min_{<_{A}} \{a \in A \: | \: 1 <_{A} a\}=\min_{<_{G}} \{a \in A \: | \: 1 <_{G} a\}  = a_{+}(1) \]
exists. 
\end{proof}

Thus $<_{A}$ is also discrete. We denote the minimal positive elements of $<_{A}$, $<_{G}$ and $<_{H}$ by $a_{\sf min}$, $g_{\sf min}$ and $h_{\sf min}$, respectively. We put $g_{\sf M} = a_{\sf min}g_{\sf min}^{-1}$ and $h_{\sf M} = a_{\sf min} h_{\sf min}^{-1}$, so $g_{\sf M} \prec_{G} a_{\sf min}$ and $h_{M} \prec_{H} a_{\sf min}$.

We start to construct an isolated ordering on a group $X=G *_{A} H$. 
We explain the construction of the isolated ordering $<_{X}^{(1)}$, which we simply denote by $<_{X}$. The construction of $<_{X}^{(2)}$ is similar. We just apply the same construction by interchanging the role of $G$ and $H$.

The amalgamated free product structure of $X$ induces a filtration 
\[ \mF_{-1}(X) \subset \mF_{-0.5}(X) \subset \mF_{0}(X) \subset \mF_{0.5}(X) \subset \mF_{1}(X) \subset \mF_{2}(X)\subset \cdots \subset \mF_{i}(X) \subset \mF_{i+1}(X) \subset \cdots\]
defined by
\begin{gather*}
\begin{cases}
\mF_{-1}(X)=\emptyset, \;\;\; \mF_{-0.5}(X)=A, \;\;\; \mF_{0}(X) = H, \;\;\; \mF_{0.5}(X)= G \cup H, \\
\mF_{2i+1}(X) = G \mF_{2i},\\
\mF_{2i}(X) = H \mF_{2i-1}.
\end{cases}
\end{gather*}

The non-integer parts of the filtrations are exceptional, and the filtration $\mF_{0.5}(X)$ is the most important because it is the restriction on $\mF_{0.5}(X)$ that eventually characterizes the isolated ordering $<_{X}$.

Starting from $<_{G}$ and $<_{H}$, we inductively construct a total ordering $<_{i}$ on $\mF_{i}(X)$. To be able to extend $<_{i}$ to a left ordering of $X$, we need the following obvious property.

\begin{definition}
We say a total ordering $<_{i}$ on $\mF_{i}(X)$ is \emph{compatible} if for any $x \in X$ and $s,t \in \mF_{i}(X)$, $xs <_{i} xt$ whenever $s<_{i} t$ and $xs, xt \in \mF_{i}(X)$. 
\end{definition}

By definition, if $<_{i}$ is a restriction of a left ordering of $X$ on $\mF_{i}(X)$, then $<_{i}$ is compatible. Conversely, Bludov-Glass proved that a compatible ordering $<_{i}$ on $\mF_{i}(X)$ can be extended to a compatible ordering $<_{i+1}$ of $\mF_{i+1}(X)$ under some conditions \cite{bg}. This is a crucial ingredient of the proof of Bludov-Glass' theorem on necessary and sufficient conditions for an amalgamated free product to be left-orderable \cite[Theorem A]{bg}.

From the point of view of the topology of $\LO(G*_{A}H)$, it is suggestive to note that Bludov-Glass' extension of $<_{i}$ to $<_{i+1}$ is far from unique. This illustrates and explains the intuitively obvious fact that ``most'' left orderings of $G*_{A} H$ are not isolated. Our isolated ordering is constructed by specifying a situation that Bludov-Glass' extension procedure must be unique.

As the first step of construction, we define an ordering $<_{\sf base}$ on $\mF_{0.5}(X)$. Since we have assumed that $A$ is a stepping with respect to both $<_{G}$ and $<_{H}$, we have the function
\[ a: \mF_{0.5}(X) \rightarrow A \]
defined by 
\begin{gather}
\label{eqn:a}
a(x) = 
\begin{cases}
\max_{<_{G}} \{a \in A \: | \: a \leq_{G} x\} & (x \in G) \\
\max_{<_{H}} \{a \in A \: | \: a \leq_{H} x\} & (x \in H).
\end{cases}
\end{gather}
 
 Using the function $a$, we define the total ordering $<_{\sf base}$ as follows:
\begin{gather}
\label{eqn:base}
\begin{cases}
g <_{\sf base} g' & \text{ if } g, g' \in G \text{ and } g<_{G}g' \\
h <_{\sf base} h' & \text{ if } h, h' \in H \text{ and } h<_{H}h' \\
h <_{\sf base} g & \text{ if } h \in H-A, g \in G-A \text{ and } a(h) \leq_{A} a(g) \\
g <_{\sf base} h & \text{ if } h \in H-A, g \in G-A \text{ and } a(g) <_{A} a(h) \\
\end{cases}
\end{gather} 

The ordering $<_{\sf base}$ can be schematically understood by Figure \ref{fig:base}.

\begin{figure}[htbp]
 \begin{center}
\includegraphics*[width=120mm]{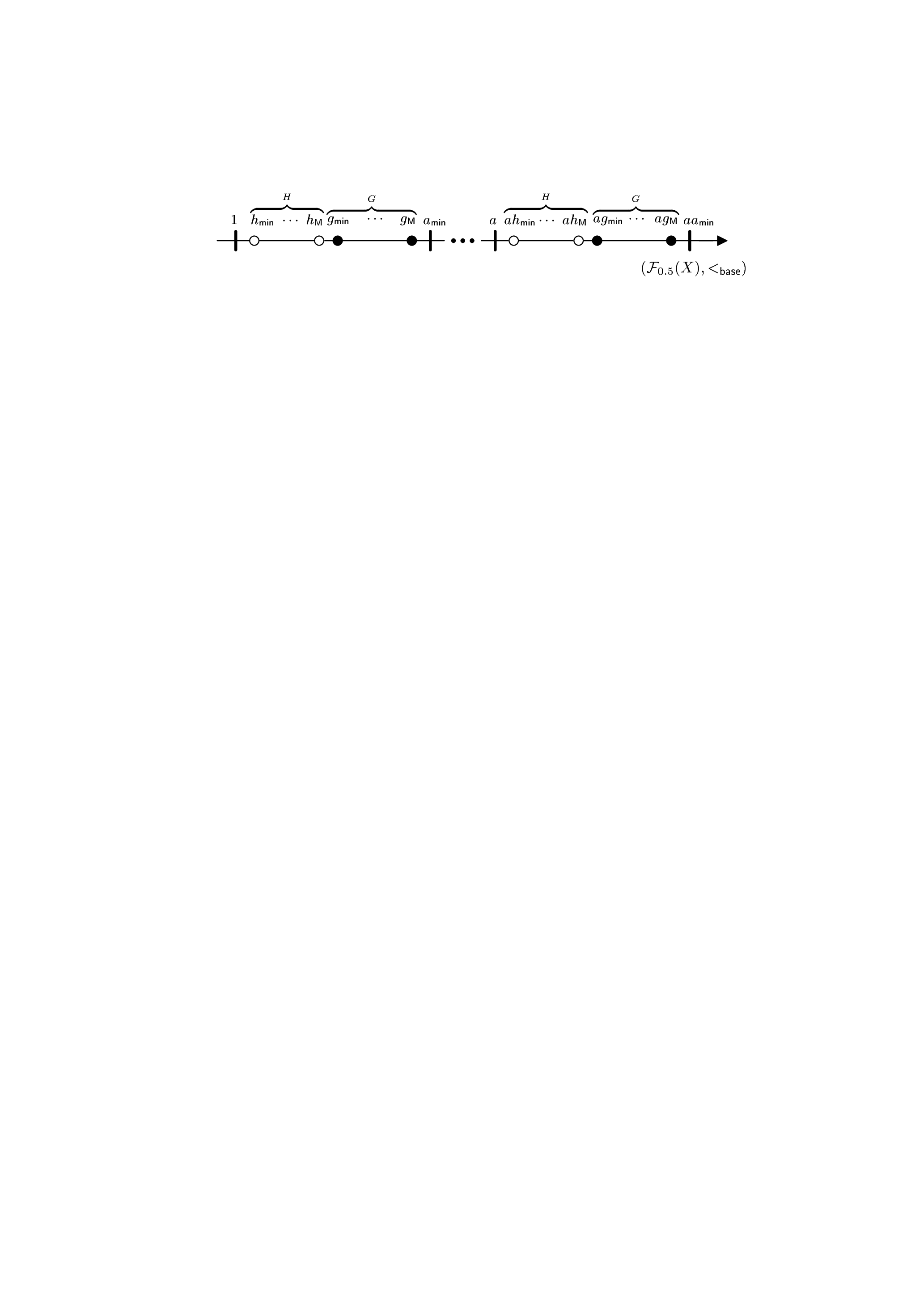}
\caption{Ordering $<_{\sf base}$ on $\mF_{0.5}(X)$.}
\label{fig:base}
\end{center}
\end{figure}

\begin{lemma}
\label{lemma:base}
The ordering $<_{\sf base}$ is the unique compatible ordering of $\mF_{0.5}(X)$ such that
\begin{description}
\item[B1] The restriction of $<_{\sf base}$ on $G$ and $H$ agrees with $<_{G}$ and $<_{H}$, respectively.
\item[B2] $h_{\sf M}=a_{\sf min}h_{\sf min}^{-1} <_{\sf base} g_{\sf min}$.
\end{description}
\end{lemma}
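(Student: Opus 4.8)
The plan is to prove the two assertions in turn: that $<_{\sf base}$ defined by \eqref{eqn:base} is a compatible total ordering satisfying B1 and B2, and that it is the unique such ordering. Throughout I may assume $A\neq G$ and $A\neq H$, since otherwise $X$ equals $G$ or $H$ and there is nothing to prove; this forces $g_{\sf min},h_{\sf min}\notin A$, hence $h_{\sf M}\in H-A$ and $g_{\sf min}\in G-A$. That \eqref{eqn:base} is a total ordering is a routine case analysis; the only delicate point is transitivity across the $G$--$H$ boundary, which I would derive from the inequalities $a(x)\leq_{G}x<_{G}a(x)a_{\sf min}$ for $x\in G$ and $a(y)\leq_{H}y<_{H}a(y)a_{\sf min}$ for $y\in H$ (valid because $<_{A}$ is discrete), together with $<_{G}|_{A}=<_{H}|_{A}=<_{A}$. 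Property B1 is immediate from the first two lines of \eqref{eqn:base}. For B2 one computes $a(g_{\sf min})=a(h_{\sf M})=1$: indeed $g_{\sf min}$ is the successor of $1$ in $G$ and $h_{\sf M}=a_{\sf min}h_{\sf min}^{-1}$ is the predecessor of $a_{\sf min}$ in $H$, while no $y\in A$ satisfies $1<_{A}y<_{A}a_{\sf min}$; hence $h_{\sf M}<_{\sf base}g_{\sf min}$ by the third line of \eqref{eqn:base}.

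The compatibility of $<_{\sf base}$ is the step I expect to be the main obstacle, as it is where the structure of the amalgam enters. Working with normal forms in $G*_{A}H$, the first task is to identify the elements $x\in X$ for which there exist $s\neq t$ in $\mF_{0.5}(X)=G\cup H$ with $xs,xt\in\mF_{0.5}(X)$; these turn out to be exactly: $x\in G$ (then every $s\in G$ is admissible and $xs\in G$), $x\in H$ (symmetrically), $x\in A$ (then all of $G\cup H$ is admissible, with $xs\in G$ or $xs\in H$ according to $s$), and $x$ of syllable length $2$, say $x=x_{1}x_{2}$ with $x_{1}\in H-A$, $x_{2}\in G-A$ or the mirror case, in which case the admissible $s$ are precisely those in the coset $x_{2}^{-1}A\subset G$ and $xs\in x_{1}A\subset H$. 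For $x\in G$ or $x\in H$ the required implication is left-invariance of $<_{G}$ or of $<_{H}$; for $x\in A$ the mixed comparisons use in addition the identity $a(ax)=a\cdot a(x)$ (itself a consequence of left-invariance and the definition of $a$); and in the length-$2$ case, writing $s=x_{2}^{-1}a$ and $t=x_{2}^{-1}b$ with $a,b\in A$, the implication reduces to $a<_{A}b\iff x_{1}a<_{H}x_{1}b$, i.e.\ left-invariance of $<_{H}$.

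It remains to prove uniqueness. Let $\prec$ be any compatible ordering of $\mF_{0.5}(X)$ satisfying B1 and B2. By B1, $\prec$ agrees with $<_{\sf base}$ on $G$ and on $H$, so it suffices to pin down the comparison of an arbitrary $h\in H-A$ with an arbitrary $g\in G-A$. The key step is to left-multiply the relation $h_{\sf M}\prec g_{\sf min}$ of B2 by an element $a\in A\subset G\cap H$: since $ah_{\sf M}\in H$ and $ag_{\sf min}\in G$, compatibility yields $ah_{\sf M}\prec ag_{\sf min}$ for every $a\in A$. Now $ah_{\sf M}$ is the $<_{H}$-largest element of $\{y\in H:a(y)=a\}$ and $ag_{\sf min}$ is the $<_{G}$-smallest element of $\{y\in G-A:a(y)=a\}$; hence, if $a(h)=a(g)=a$, then by B1 $h\preceq ah_{\sf M}\prec ag_{\sf min}\preceq g$, so $h\prec g$, in agreement with $<_{\sf base}$. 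If instead $a(h)<_{A}a(g)$, then $a(h)a_{\sf min}\leq_{A}a(g)$, so $h<_{H}a(h)a_{\sf min}\leq_{H}a(g)$, whence $h\prec a(g)\preceq g$ by B1; the case $a(g)<_{A}a(h)$ is symmetric and gives $g\prec h$. In every case $\prec$ coincides with $<_{\sf base}$, so $<_{\sf base}$ is the unique compatible ordering of $\mF_{0.5}(X)$ with properties B1 and B2.
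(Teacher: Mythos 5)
Your proof is correct and the uniqueness argument is essentially the paper's: reduce to the mixed comparisons $h$ vs.\ $g$, settle the case $a(h)\neq a(g)$ using only \textbf{B1}, and settle $a(h)=a(g)$ by transporting the single inequality $h_{\sf M}<g_{\sf min}$ of \textbf{B2} by an element of $A$ via compatibility. The only difference is that the paper dispatches existence with ``by definition,'' whereas you supply the omitted verification (total order, \textbf{B1}, \textbf{B2} via $a(h_{\sf M})=a(g_{\sf min})=1$, and compatibility via the case analysis of which $x\in X$ admit distinct $s,t$ with $xs,xt\in\mF_{0.5}(X)$), and that analysis is accurate.
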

\begin{proof}
By definition, $<_{\sf base}$ is a compatible ordering with {\bf [B1]} and {\bf [B2]}.
Assume that $<'$ is another compatible total ordering on $\mF_{0.5}(X)$ with the same properties. To see the uniqueness, it is sufficient to show that for $g \in G-A$ and $h \in H-A$, $h<_{\sf base} g$ implies $h <' g$.

By definition of $<_{\sf base}$, $a(h) \leq_{A} a(g)$. If $a(h) <_{A} a(g)$, then $h <' a(h) a_{\sf min} \leq' a(g) <' g$ so $h<'g$. Assume that $a(h)=a(g)$ and put $a=a(g)=a(h)$. By {\bf [B1]}, $1 <' a^{-1}h <' a_{\sf min}$ hence $1 <' a^{-1} h \leq' h_{\sf M} = a_{\sf min}h_{\sf min}^{-1}$.
Similarly, $1 <' a^{-1}g$ so $g_{\sf min} \leq' a^{-1}g$.
By {\bf [B2]},
\[ a^{-1} h \leq' h_{\sf M}<' g_{\sf min} \leq' a^{-1}g, \]
 hence $ a^{-1} h <' a^{-1} g $. Since $<'$ is compatible, $h <' g$.
\end{proof}

Lemma \ref{lemma:base}, combined with our assumption (c) of Theorem \ref{theorem:main}, shows the following.

\begin{proposition}
\label{prop:charbase}
The compatible ordering $<_{\sf base}$ is characterized by finitely many inequalities: Let $\{g_{1},\ldots,g_m\}$ be a characteristic positive set of $<_{G}$ and $\{h_{1},\ldots,h_n\}$ be a characteristic positive set of $<_{H}$ relative to $A$. Then $<_{\sf base}$ is the unique compatible ordering on $\mF_{0.5}(X)$ that satisfies the inequalities
\begin{gather}
\label{eqn:char}
\begin{cases}
1<_{\sf base} g_{i} & (i=1,\ldots,m),\\
1 <_{\sf base} h_{j} & (j=1,\ldots,n), \\
a_{\sf min}h_{\sf min}^{-1} <_{\sf base} g_{\sf min}. 
\end{cases}
\end{gather}
\end{proposition}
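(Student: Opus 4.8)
The plan is to deduce this from Lemma \ref{lemma:base} by showing that the three families of inequalities in \eqref{eqn:char} are equivalent, for a compatible ordering on $\mF_{0.5}(X)$, to conditions \textbf{[B1]} and \textbf{[B2]}. The implication $\eqref{eqn:base}\Rightarrow\eqref{eqn:char}$ is immediate: $<_{\sf base}$ restricts to $<_{G}$ and $<_{H}$, so $1<_{\sf base}g_i$ and $1<_{\sf base}h_j$ hold, and \textbf{[B2]} is literally the third inequality. So the content is the converse: suppose $<'$ is a compatible ordering on $\mF_{0.5}(X)$ satisfying \eqref{eqn:char}; I must show $<'$ satisfies \textbf{[B1]} and \textbf{[B2]}, whence $<'=<_{\sf base}$ by Lemma \ref{lemma:base}.

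First I would establish that $<'$ restricted to $G$ equals $<_{G}$. Since $<'$ is compatible, its restriction to $G$ is a left-ordering of $G$ (left-invariance of the restriction under $G$ is exactly compatibility applied with $s,t,xs,xt\in G\subset\mF_{0.5}(X)$), and it makes all $g_i$ positive; because $\{g_1,\dots,g_m\}$ is a characteristic positive set of the isolated ordering $<_{G}$, this forces the restriction to be $<_{G}$. The restriction to $H$ requires the relative notion: the restriction of $<'$ to $H$ is a left-ordering $<'_H$ of $H$, and since $<'$ restricts on $A\subset H$ to the restriction of $<_{G}$ on $A$ — here I use that $A\subset G$ and the previous step, so $<'_H$ restricted to $A$ is $<_A$ — the ordering $<'_H$ lies in $\textsf{Res}^{-1}(\textsf{Res}(<_{H}))\subset\LO(H)$. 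Since in addition $1<'h_j$ for all $j$ and $\{h_1,\dots,h_n\}$ is a characteristic positive set of $<_{H}$ relative to $A$, we conclude $<'_H=<_H$. That gives \textbf{[B1]}; \textbf{[B2]} is the remaining inequality of \eqref{eqn:char} verbatim. Applying Lemma \ref{lemma:base} finishes the proof.

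The one point that needs a little care — and the only real obstacle — is the step showing the restriction of a compatible ordering on $\mF_{0.5}(X)$ to $H$ lands in the right fiber of $\textsf{Res}$, i.e.\ that it induces $<_A$ on $A$ and not some other ordering of $A$. This hinges on the fact that $A\subset G$ as well, so the restriction to $A$ is already pinned down once we know the restriction to $G$ is $<_{G}$; concretely, for $a,a'\in A$ one has $a<'a'$ iff $a<_G a'$ (by the $G$-step) iff $a<_A a'$ (by assumption (a)). After that, both characteristic-positive-set hypotheses are invoked in exactly the form in which they were defined, and the conclusion is purely formal via Lemma \ref{lemma:base}. I would state the lemma-level fact ``a compatible ordering on $\mF_{0.5}(X)$ restricts to left-orderings on $G$ and on $H$'' explicitly, since it is used twice and is the mechanism converting \eqref{eqn:char} into hypotheses \textbf{[B1]}--\textbf{[B2]}.
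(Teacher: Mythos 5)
Your proposal is correct and follows essentially the same route as the paper: use the characteristic positive set of $<_{G}$ to pin down the restriction to $G$ (hence to $A$), then use relative isolation of $<_{H}$ with respect to $A$ to pin down the restriction to $H$, which yields \textbf{[B1]}, and note \textbf{[B2]} is the third inequality verbatim, so Lemma \ref{lemma:base} applies. The only difference is that you spell out the (correct, and worth stating) observation that compatibility forces the restrictions to $G$ and $H$ to be left-orderings, which the paper leaves implicit.
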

\begin{proof}
The set of inequalities $\{1<_{\sf base} g_{i}\}$ uniquely determines the restriction of $<_{\sf base}$ on $G$ so in particular, determines the restriction of $<_{\sf base}$ on $A$. Since $<_{H}$ is relatively isolated with respect to $<_{H}$, the additional inequalities $\{1<_{\sf base} h_{i}\}$ uniquely determines the restriction of $<_{\sf base}$ on $H$. Therefore the family of inequalities (\ref{eqn:char}) implies {\bf [B1]} and {\bf [B2]} in Lemma \ref{lemma:base}.
\end{proof}

The next step is to extend the ordering $<_{\sf base}$ to a compatible ordering $<_{1}$ of $\mF_{1}(X)=GH$. For $a \in A$, let 
\begin{eqnarray*}
\Delta_{a} &= & \{h \in H-A \: | \: a(h)=a\} \\
& = & \{h \in H-A \: | \: a <_{H} h <_{H} aa_{\sf min} \} = \{h \in H-A \: | \: ah_{\sf min} \leq_{H} h \leq_{H} ah_{\sf M}\}.
\end{eqnarray*}

First we observe the following property which plays a crucial role in proving the uniqueness.

\begin{lemma}
\label{lemma:key0}
For $g,g' \in G$ and $h,h' \in H$, if $ga(h)=g'a(h')$ then  $g\Delta_{a(h)}=g'\Delta_{a(h')}$.
\end{lemma}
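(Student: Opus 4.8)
The plan is to reduce the statement to a single translation identity. First I would record that, for every $a\in A$,
\[ \Delta_{a}=a\,\Delta_{1}, \qquad \text{where}\quad \Delta_{1}=\{h\in H-A\mid h_{\sf min}\leq_{H} h\leq_{H} h_{\sf M}\} \]
is the set $\Delta_{a}$ in the special case $a=1$. This is immediate from the middle description $\Delta_{a}=\{h\in H-A\mid a<_{H}h<_{H}aa_{\sf min}\}$ appearing in the definition of $\Delta_{a}$: multiplying the chain $a<_{H}h<_{H}aa_{\sf min}$ on the left by $a^{-1}$ and invoking left-invariance of $<_{H}$ converts the defining condition for membership in $\Delta_{a}$ into the one for membership in $\Delta_{1}$, and the condition $h\in H-A$ is preserved because $a\in A$. (The existence of $a(h)$, and of the successor $aa_{\sf min}$ of $a$ inside $(A,<_{A})$, is exactly what the stepping hypothesis (b) and Lemma \ref{lemma:discA} supply.)

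Granting this, the lemma is one line. Write $a=a(h)$ and $a'=a(h')$. All four of $g,g',a,a'$ lie in $G$, so the hypothesis $ga=g'a'$ is an equality of elements of $G$, hence also an equality in $X=G*_{A}H$ under the canonical embedding $G\hookrightarrow X$. Since left translation by a fixed element of $X$ is a bijection of $X$ and multiplication in $X$ is associative, applying it elementwise gives
\[ g\,\Delta_{a(h)}=g(a\,\Delta_{1})=(ga)\,\Delta_{1}=(g'a')\,\Delta_{1}=g'(a'\,\Delta_{1})=g'\,\Delta_{a(h')} \]
as subsets of $X$, which is the claim.

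I do not expect any real obstacle: the entire content of the lemma is the identity $\Delta_{a}=a\Delta_{1}$, which is nothing more than a repackaging of the stepping property, and everything after that is formal manipulation of cosets in $X$. The only things to watch are the bookkeeping points flagged above — that $a(h)$ and $a(h')$ are defined, that the hypothesis $ga=g'a'$ may be read in $G$ or in $X$ interchangeably, and that $g(a\Delta_{1})=(ga)\Delta_{1}$ is just associativity applied setwise. Accordingly I would present the proof as the two displays above together with the short justification of the first one.
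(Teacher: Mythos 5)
Your proof is correct and is essentially the paper's argument in a lightly repackaged form: the paper deduces $g^{-1}g'=a(h)a(h')^{-1}\in A$ and uses left-invariance of $<_H$ to get $a(h)a(h')^{-1}\Delta_{a(h')}=\Delta_{a(h)}$, which is exactly your identity $\Delta_a=a\Delta_1$ applied twice. Both proofs then conclude by the same formal setwise manipulation, so there is nothing substantive to add.
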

\begin{proof}
$ga(h)=g'a(h')$ implies that $g^{-1}g' = a(h)a(h')^{-1} \in A$.
This shows $(g^{-1}g') \Delta_{a(h')} = a(h)a(h')^{-1}\Delta_{a(h')} = \Delta_{a(h)}$ hence $g\Delta_{a(h)}=g'\Delta_{a(h')}$.
\end{proof}

\begin{proposition}
\label{prop:extbto1}
There exists a unique compatible total ordering $<_{1}$ on $\mF_{1}(X)$ that extends $<_{\sf base}$.
\end{proposition}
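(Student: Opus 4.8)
The plan is to extend $<_{\sf base}$ from $\mF_{0.5}(X) = G \cup H$ to $\mF_1(X) = GH$ by specifying, for each element $gh \in GH$ with $g \in G$ and $h \in H - A$, where it sits relative to elements of $G$ (the "gaps" in $G$) and relative to the other products $g'h'$, and then showing both that the recipe is forced by compatibility together with agreement with $<_{\sf base}$, and that it actually defines a compatible total order. First I would normalize the representation: every element of $\mF_1(X) = GH$ not already in $G \cup H$ can be written as $gh$ with $g \in G$ and $h \in \Delta_a$ for a unique $a = a(h) \in A$, and this representation is unique up to $gh = (ga)(a^{-1}h)$; Lemma~\ref{lemma:key0} is exactly what makes the coset $g\Delta_{a(h)}$ well defined, so the "block" $g\Delta_{a(h)}$ attached to the group element $ga(h) \in G$ is an invariant of $gh$. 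The guiding picture is that each block $g\Delta_{a}$ must be inserted into the gap of $(G,<_G)$ between $ga$ and $ga\cdot a_{\sf min} = ga_{+}$, because compatibility forces $ga <_1 gh <_1 ga a_{\sf min}$ whenever $a <_H h <_H a a_{\sf min}$, and within a block the order is forced to agree with $<_H$ (again by compatibility, after left-translating by $(ga)^{-1}$).

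The key step is the forcedness argument. Given two elements $x = gh$, $x' = g'h'$ of $GH \setminus (G \cup H)$, I distinguish cases according to the blocks $B = g\,a(h)$ and $B' = g'\,a(h')$ viewed as elements of $G$. If $B \ne B'$ as elements of $G$, then I claim $x$ and $x'$ are ordered the same way as $B, B'$ are in $<_G$: indeed if $B <_G B'$ then, since $<_G$ is discrete, $B a_{\sf min} \le_G B'$, and compatibility gives $x <_1 B a_{\sf min} \le_G B' <_1 x'$ — here I use $gh <_1 ga(h)a_{\sf min}$ and $g'a(h') <_1 g'h'$, both instances of compatibility applied to a relation in $\mF_{0.5}(X)$ after left translation. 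If $B = B'$, i.e. $ga(h) = g'a(h')$, then Lemma~\ref{lemma:key0} gives $g\Delta_{a(h)} = g'\Delta_{a(h')}$, so writing $a = a(h) = a(h')$ we have $g^{-1}g' \in A$ and $x, x'$ both lie in the single coset $B\,\Delta_1'$ where they are compared by $(B)^{-1}x = $ (an element of $H$) versus $(B)^{-1}x'$: compatibility forces $x <_1 x'$ iff $(ga)^{-1}gh <_H (g'a')^{-1}g'h'$, i.e. iff $a^{-1}\!\cdot\! a(h)^{-1}h <_H \cdots$; but $a(h)^{-1} h$ and $a(h')^{-1}h'$ are precisely the "fractional parts" in $\Delta_1$, so the comparison is dictated by $<_H$. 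I also must cover the mixed case $x = gh$ versus a pure element $g'' \in G$: $x$ sits strictly between $g a(h)$ and $g a(h) a_{\sf min}$, so $x <_1 g''$ iff $g a(h) a_{\sf min} \le_G g''$ iff $g a(h) <_G g''$ (equivalently $g a(h) \le_G g''$ and $g'' \ne g a(h)$, using discreteness), which is forced; comparison of $x$ with a pure element of $H - A$ reduces, via the definition of $<_{\sf base}$ and the function $a$, to a comparison already handled in $\mF_{0.5}(X)$ after translation. Collecting these cases defines a relation $<_1$ on $\mF_1(X)$, visibly total, extending $<_{\sf base}$, and the above shows it is the only candidate.

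It then remains to verify that this $<_1$ is genuinely a total order (transitivity is the only nontrivial axiom) and that it is compatible. For transitivity I would argue by a "lexicographic" bookkeeping: to each $x = gh \in \mF_1(X)$ associate the pair $(\,\beta(x),\ \phi(x)\,)$ where $\beta(x) = g\,a(h) \in G$ is the block (set $\beta(x) = x$ for $x \in G$, and for $x \in H - A$ set $\beta(x)$ to the appropriate element of $G$ dictated by $a(x)$ — this needs a small compatibility check with the $<_{\sf base}$ rules) and $\phi(x) = \beta(x)^{-1}x \in \{1\} \cup \bigsqcup_a \Delta_a^{\mathrm{loc}}$ is the fractional part, and observe that $x <_1 x'$ iff $\beta(x) <_G \beta(x')$, or $\beta(x) = \beta(x')$ and $\phi(x) <_{\mathrm{loc}} \phi(x')$ with $<_{\mathrm{loc}}$ the induced order on the fractional parts inside one block; transitivity of $<_1$ then follows from transitivity of $<_G$ and of $<_{\mathrm{loc}}$ (the latter being a sub-order of $<_H$) provided one checks the cross-terms, namely that $\beta(x) <_G \beta(x')$ combined with $\beta(x') <_G \beta(x'')$, etc., behave correctly — routine. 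For compatibility, take $s, t \in \mF_1(X)$ with $s <_1 t$ and $x \in X$ with $xs, xt \in \mF_1(X)$; I would split on whether $x \in G$, $x \in H$, or $xs,xt$ stay in the same block: left translation by $x \in H$ or $x \in A$ permutes blocks in a way compatible with $<_G$ on $G$ and compatible with $<_H$ inside blocks (because $<_{\sf base}$ was compatible and $\Delta_a$ is defined $<_H$-intervally), while translation by $x \in G$ shifts the $G$-coordinate. The main obstacle I anticipate is precisely this compatibility verification: one must rule out the degenerate possibility that $xs$ and $xt$ land in $\mF_1(X)$ "from different directions" — e.g. $s$ in a block and $t$ a pure $G$-element, with $x$ such that $xs$ becomes pure while $xt$ becomes a block element — and check that the forced rules remain consistent. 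This is the place where assumption (b), that $A$ is stepping, is used essentially, since it guarantees the functions $a(\cdot)$, $a_+(\cdot)$ are total and that $\Delta_a$ is a genuine $<_H$-interval with endpoints $ah_{\sf min}$ and $ah_{\sf M}$, so that "insert the block into the gap of $G$" is well posed on all of $\mF_1(X)$ at once.
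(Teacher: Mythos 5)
Your overall strategy coincides with the paper's: decompose $\mF_{1}(X)\setminus\mF_{0.5}(X)$ into blocks $g\Delta_{a}$, use Lemma~\ref{lemma:key0} to see that each block is well defined and attached to the element $ga\in G$, insert each block into a gap of the base ordering, and describe the result lexicographically. However, the forcedness step --- the heart of the uniqueness claim --- contains a genuine error. You pin the block $g\Delta_{a}$ only into the interval between $ga$ and $ga\,a_{\sf min}$, via the compatible translate of $a<_{H}h<_{H}a\,a_{\sf min}$. That interval is \emph{not} a gap of $(G,<_{G})$: it contains the elements $ga\,g'$ with $1<_{G}g'<_{G}a_{\sf min}$ (that is, $ga\,g_{\sf min},\dots,ga\,g_{\sf M}$), and the constraint you derive does not decide where the block sits relative to them. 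The inconsistency surfaces twice. In the mixed case you assert that $x<_{1}g''$ holds iff $ga(h)a_{\sf min}\le_{G}g''$ iff $ga(h)<_{G}g''$; these two conditions disagree exactly on the $g''$ with $ga(h)<_{G}g''<_{G}ga(h)a_{\sf min}$ (the first puts the block at the top of the interval, the second at the bottom), so they are not equivalent and neither is forced by what you have invoked. In the case $B\ne B'$ you claim that discreteness turns $B<_{G}B'$ into $Ba_{\sf min}\le_{G}B'$; this is false, e.g.\ for $B'=Bg_{\sf min}$ (discreteness only yields $Bg_{\sf min}\le_{G}B'$), and for such $B,B'$ the intervals $(B,Ba_{\sf min})$ and $(B',B'a_{\sf min})$ overlap, so your chain $x<_{1}Ba_{\sf min}\le_{G}B'<_{1}x'$ breaks down and $x,x'$ are left uncompared.

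What is missing is the part of the definition of $<_{\sf base}$ that interleaves $H-A$ with $G-A$ inside each $A$-gap (equivalently, condition \textbf{[B2]} of Lemma~\ref{lemma:base}): for $h\in\Delta_{a}$ and $g'\in G-A$ with $a(g')=a$ one has $h<_{\sf base}g'$, hence $a<_{\sf base}h<_{\sf base}a\,g_{\sf min}$, not merely $h<_{\sf base}a\,a_{\sf min}$. Translating by $g$, and noting that $ga\prec_{\sf base}ga\,g_{\sf min}$ is a genuine successor gap in all of $\mF_{0.5}(X)$ (no elements of $H$ intervene either), compatibility forces the entire block $g\Delta_{a}$ into the gap $(ga,\,ga\,g_{\sf min})$; combined with Lemma~\ref{lemma:key0} this makes both the location and the inserted ordered set unique, which is exactly the paper's argument. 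Once the blocks are pinned into these successor gaps, your lexicographic description ($x<_{1}x'$ iff the attachment points compare in $<_{G}$, or they agree and the fractional parts compare in $<_{H}$) is the paper's formula, and the remaining verifications of transitivity and compatibility go through along the lines you sketch.
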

\begin{proof}

For each $a \in A$ and $g\in G-A$, we regard $g\Delta_{a}$ as a totally ordered set equipped with an ordering $<_{1}$ defined by $gh <_{1} gh'$ ($h,h' \in \Delta_{a}$) if and only if $h <_{H} h'$.

First we check that this ordering $<_{1}$ is well-defined on each $g\Delta_{a}$.  Assume that $g\Delta_{a} = g' \Delta_{a'}$ as a subset of $\mF_{1}(X)$. Let $gh_{0}= g'h'_{0}, gh_{1}=g'h'_{1}$ be elements of $g\Delta_{a} = g' \Delta_{a'}$, where $h_{0},h_{1} \in \Delta_{a}$ and $h'_{0}, h'_{1} \in \Delta_{a'}$.
Note that $g\Delta_{a} = g' \Delta_{a'}$ implies that $g^{-1}g' \in A$. Therefore, 
\begin{eqnarray*}
gh_{0}<_{1} gh_{1} & \iff & h_{0}<_{H} h_{1} \\
& \iff & (g^{-1}g')h'_{0} <_{H} (g^{-1}g')h'_{1} \\
& \iff & h'_{0} <_{H} h'_{1}\\
& \iff & g'h'_{0} <_{1} g'h'_{1}.
\end{eqnarray*}
This shows that $<_{1}$ is a well-defined total ordering on $g\Delta_{a}$.

Since $\mF_{1}(X) = \mF_{0}(X) \cup \left(\bigcup g \Delta_{a} \right)$, we construct the desired ordering $<_{1}$ by inserting the ordered sets $g\Delta_{a}$ into $\mF_{0}(X)$. We show that the way to inserting $g\Delta_{a}$ is unique. 

First of all, $a <_{\sf base} h <_{\sf base} ag_{\sf min}$ for $h \in \Delta_{a}$, so a compatible ordering $<_{1}$ must satisfy 
\[ ga <_{1} gh <_{1} gag_{\sf min}  \;\;\; (g \in G-A).\]
By definition of $<_{\sf base}$, $ga \prec_{\sf base} gag_{\sf min}$, that is, there are no elements of $\mF_{0.5}(X)$ that lies between $ga$ and $gag_{\sf min}$.
This says that to get a compatible ordering, we must insert the ordered set $g\Delta_{a}$ between $ga$ and $gag_{\sf min}$. Moreover, by Lemma \ref{lemma:key0}, $ga(h) = g'a(h')$ implies $g \Delta_{a(h)}=g'\Delta_{a(h')}$. This means that the ordered set $g\Delta_a$ inserted between $ga$ and $gag_{\sf min}$ must be unique.

\begin{figure}[htbp]
 \begin{center}
\includegraphics*[width=100mm]{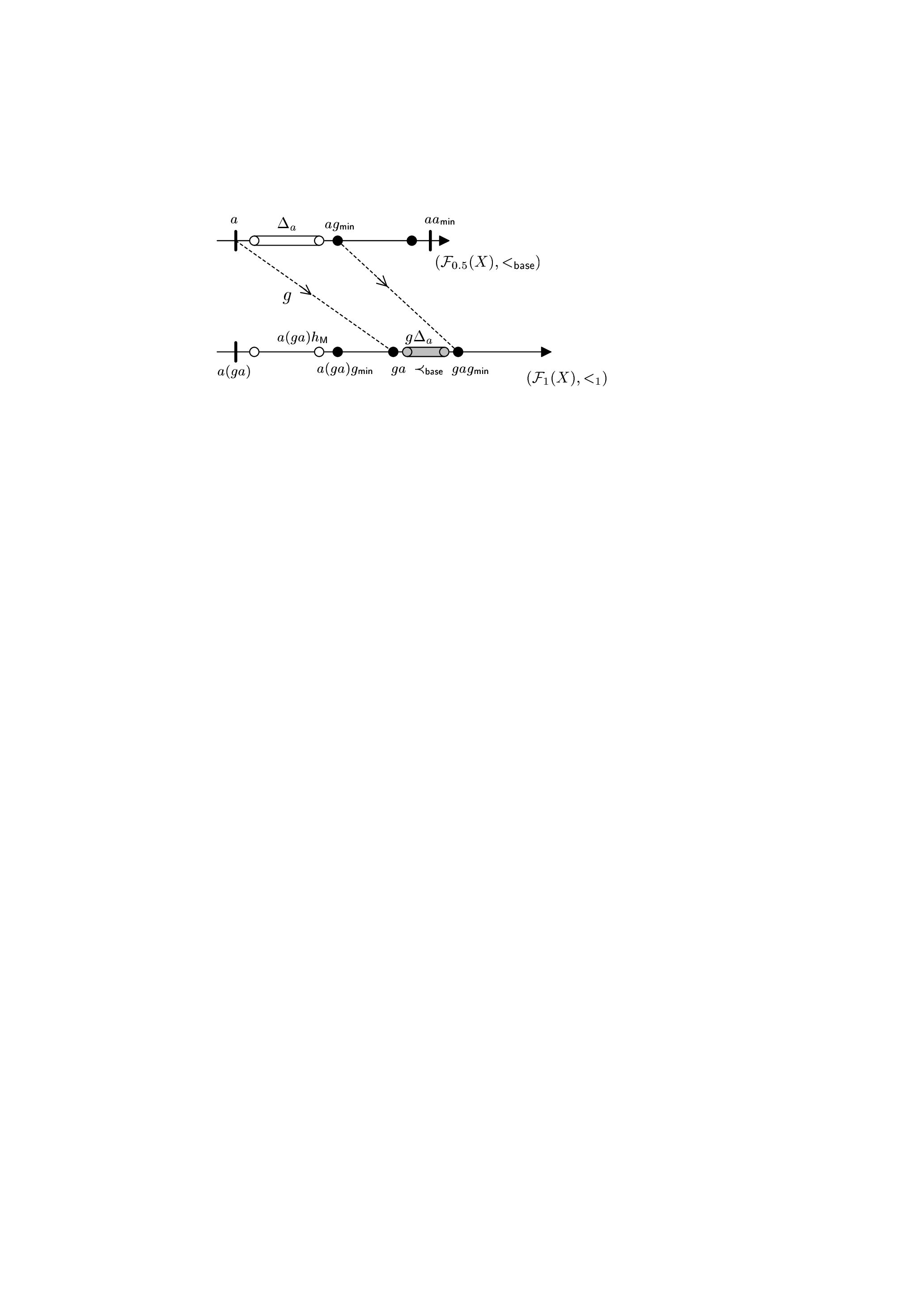}
\caption{Ordering $<_{1}$: Inserting $g\Delta_{a}$ between $ga$ and $gag_{\sf min}$.}
\label{fig:o1}
\end{center}
\end{figure}

Therefore there is the unique way of inserting $g\Delta_{a}$ into $\mF_{0}(X)$ to get a compatible ordering on $\mF_{1}(X)$. The process of inserting $g\Delta_{a}$ is schematically explained in Figure \ref{fig:o1}.

The resulting ordering $<_{1}$ is written as follows. For $x = gh$ and $ x'=g'h'$ $(g \in G, h \in H)$, we have
\begin{gather}
\label{eqn:o1}
x <_{1} x' \iff 
\begin{cases}
ga(h) <_{\sf base} g'a(h') \text{, or,}\\
ga(h)=g'a(h') \text{ and } h <_{\sf base} (g^{-1}g')h'.
\end{cases}
\end{gather}

Note that by the proof of Lemma \ref{lemma:key0}, $ga(h)=g'a(h')$ implies $g^{-1}g' \in A$, hence $(g^{-1}g')h' \in \mF_{0.5}(X)$. Hence the inequality $h <_{\sf base} (g^{-1}g')h'$ makes sense.

\end{proof}

In a similar manner, we extend the ordering $<_{1}$ of $\mF_{1}(X)$ to a compatible ordering $<_{2}$ of $\mF_{2}(X)$. We define the map $c_{0}: \mF_{1}(X)-\mF_{0}(X) \rightarrow \mF_{0}(X)$ by 
\[ c_{0}(x)=\max_{<_{1}} \{ y \in \mF_{0}(X) \: | \: y <_{1} x\}, \]
and for $y \in \mF_{0}(X)$, we put 
\begin{eqnarray*}
\Delta_{y} & = &\{x \in \mF_{1}(X)-\mF_{0}(X) \: | \: c_{0}(x)=y\} \\
& = &\{x \in \mF_{1}(X) \: | \: y <_{1} x <_{1} yh_{\sf min}\}.
\end{eqnarray*}

\begin{lemma}
\label{lemma:c_0}
The map $c_{0}$ and the set $\Delta_{y}$ have the following properties.
\begin{enumerate}
\item For $x = gh \in \mF_{1}(X)-\mF_{0}(X)$ ($g \in G-A, h \in H$), $c_{0}(gh)=a(ga(h))h_{\sf M}$. 
Here $a: \mF_{0.5}(X) \rightarrow A$ is the map defined by (\ref{eqn:a}). 
\item For $x, x' \in \mF_{1}(X)-\mF_{0}(X)$ and $h,h' \in H$, if $hc_{0}(x) = h' c_{0}(x')$ then $h\Delta_{c_{0}(x)} = h'\Delta_{c_{0}(x')}$.
\end{enumerate}
\end{lemma}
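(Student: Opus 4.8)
\textbf{Proof plan for Lemma \ref{lemma:c_0}.}

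The plan is to prove the two statements in order, using the explicit description \eqref{eqn:o1} of $<_{1}$ and exactly mimicking the structure of Lemma \ref{lemma:key0} and Proposition \ref{prop:extbto1}. For part (1), I would start from $x = gh$ with $g \in G-A$ and $h \in H$, and analyze $c_{0}(gh) = \max_{<_{1}}\{y \in \mF_{0}(X) \mid y <_{1} x\}$. Using \eqref{eqn:o1}, for $y \in \mF_{0}(X) = H$ we have $a(y) = y$ viewed appropriately (more precisely $y = 1\cdot y$ with the $G$-coordinate trivial), so comparing $y$ with $gh$ amounts to comparing $a(y)$-type data with $ga(h)$ in $<_{\sf base}$. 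The point is that among elements of $H$, the largest one that is $<_{1}$-below $gh$ is governed by the $<_{\sf base}$-position of $ga(h)$: since $ga(h) \in G-A$ (as $g \in G-A$), by the definition \eqref{eqn:base} of $<_{\sf base}$ the elements of $H$ lying just below $ga(h)$ are exactly those $h'' \in H$ with $a(h'') \leq_{A} a(ga(h))$, and the maximal such $h''$ in $<_{H}$ is $a(ga(h)) h_{\sf M} = a(ga(h)) a_{\sf min} h_{\sf min}^{-1}$, the top of the block $\Delta_{a(ga(h))}$ in $H$. I would check carefully that this $h''$ is indeed $<_{1}$-below $gh$ and that nothing in $H$ between it and $gh$ exists, which comes down to the fact that $a(ga(h)) \prec_{\sf base}$ its successor and the block $g\Delta_{a(h)}$ was inserted immediately above $ga(h)$.

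For part (2), I would argue exactly as in Lemma \ref{lemma:key0}. The hypothesis $h c_{0}(x) = h' c_{0}(x')$ with $h, h' \in H$ and $c_{0}(x), c_{0}(x') \in \mF_{0}(X) = H$ forces $h^{-1}h' = c_{0}(x) c_{0}(x')^{-1} \in H$; but more is true — I need $h^{-1}h' \in A$ so that left-translation by it is well-defined on the relevant subsets. This follows because $c_{0}(x) = a(ga(h)) h_{\sf M}$ lies in the coset $A h_{\sf M}$ by part (1), so $c_{0}(x) c_{0}(x')^{-1} = a(ga(h)) h_{\sf M} h_{\sf M}^{-1} a(g'a(h'))^{-1} = a(ga(h)) a(g'a(h'))^{-1} \in A$. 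Then $(h^{-1}h') \Delta_{c_{0}(x')} = \Delta_{c_{0}(x)}$ by the left-invariance already built into the definition of $\Delta_{y} = \{x \in \mF_{1}(X) \mid y <_{1} x <_{1} y h_{\sf min}\}$ together with compatibility of $<_{1}$ (an element of $A$ translates the block above $c_{0}(x')$ to the block above $c_{0}(x)$), whence $h \Delta_{c_{0}(x)} = h' \Delta_{c_{0}(x')}$.

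The main obstacle I anticipate is entirely in part (1): pinning down the formula $c_{0}(gh) = a(ga(h)) h_{\sf M}$ requires being scrupulous about the two clauses of \eqref{eqn:o1} and about which elements of $\mF_{0}(X)$ versus $\mF_{1}(X)-\mF_{0}(X)$ sit between $a(ga(h)) h_{\sf M}$ and $gh$. In particular I must rule out that some block $g''\Delta_{a''}$ with $g'' \in G-A$ sneaks in below $gh$ but still above $a(ga(h)) h_{\sf M}$ at the level of $\mF_{0}(X)$ — but this cannot happen because such blocks were inserted strictly above elements of $G-A$, and $a(ga(h))$ is the $<_{\sf base}$-largest element of $A$ not exceeding $ga(h)$, so any $G$-element contributing such a block would have to equal $ga(h)$ itself, and then the corresponding block is $g\Delta_{a(h)}$, which contains $h$ — consistent with the claim. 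Once this bookkeeping is done, part (2) is a routine translation argument of the type already established.
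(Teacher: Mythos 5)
Your proposal is correct and takes essentially the same route as the paper: part (1) by noting that no element of $H=\mF_{0}(X)$ lies $<_{1}$-between $ga(h)$ and $gh$ and that the $<_{H}$-largest element of $H$ below $ga(h)$ is the top $a(ga(h))h_{\sf M}$ of the block $\Delta_{a(ga(h))}$, and part (2) by the same translation argument using $h^{-1}h' = a(ga(y))a(g'a(y'))^{-1}\in A$. (Your worry about blocks $g''\Delta_{a''}$ interfering is moot, since $c_{0}$ maximizes only over $\mF_{0}(X)$.)
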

\begin{proof}
Note that $a(ga(h)) <_{1} ga(h) <_{1} gh$. By definition of $<_{1}$ given in (\ref{eqn:o1}), there are no elements of $\mF_{0}(X)=H$ between $ga(h)$ and $gh$. Moreover, for $g \in G$ $c_{0}(g) = a(g)h_{\sf M}$ (see Figure \ref{fig:o1} again). This proves $c_{0}(gh) = c_{0}(ga(h))=a(ga(h))h_{\sf M}$. 

To see (2), write $x=gy$ and $x=g'y'$ $(g,g' \in G, \ y,y' \in\mF_{0})$. Then by (1), $hc_{0}(x)=h'c_{0}(x')$ implies that
$h^{-1}h' = c_{0}(x)c_{0}(x')^{-1} = a(ga(y))a(g'a(y'))^{-1} \in A$.
This shows $(h^{-1}h')\Delta_{c_{0}(x')} = \Delta_{c_{0}(x)}$ hence $h\Delta_{c_{0}(x)} = h'\Delta_{c_{0}(x')}$.
\end{proof}
 
\begin{proposition}
\label{prop:ext1to2}
There exists a unique compatible total ordering $<_{2}$ on $\mF_{2}(X)$ that extends $<_{1}$.
\end{proposition}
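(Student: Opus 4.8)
The plan is to repeat the proof of Proposition~\ref{prop:extbto1} one level higher, with the data $<_{\sf base}$, $\mF_{0.5}(X)$, $\mF_0(X)$, $a$, $\Delta_a$, Lemma~\ref{lemma:key0} replaced respectively by $<_1$, $\mF_1(X)$, $\mF_1(X)-\mF_0(X)$, $c_0$, $\Delta_y$, Lemma~\ref{lemma:c_0}. Since $c_0$ is a function, $\mF_1(X)-\mF_0(X)=\bigsqcup_{y\in\mF_0(X)}\Delta_y$, and therefore
\[
\mF_2(X)=H\,\mF_1(X)=\mF_1(X)\ \cup\ \bigcup_{h\in H-A,\ y\in\mF_0(X)}h\Delta_y.
\]
I would build $<_2$ by giving each block $h\Delta_y$ (with $h\in H-A$) the order transported from $(\Delta_y,<_1)$ by left translation, $hx<_2hx'\iff x<_1x'$, and then inserting it into the appropriate gap of $(\mF_1(X),<_1)$.

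Two points carry the proof. \emph{Well-definedness of a block and of its order.} If $hx=h'x'$ with $x,x'\in\mF_1(X)-\mF_0(X)$, then $c_0$ intertwines left translation by $k:=h^{-1}h'\in H$ — translation by $k$ is a $<_1$-order isomorphism of $\mF_0(X)$ carrying $\{w\in\mF_0(X):w<_1x'\}$ onto $\{w\in\mF_0(X):w<_1 kx'=x\}$, by compatibility of $<_1$ — so $hc_0(x)=h'c_0(x')$, whence Lemma~\ref{lemma:c_0}(2) gives $h\Delta_{c_0(x)}=h'\Delta_{c_0(x')}$ as subsets of $\mF_2(X)$, and one further use of compatibility of $<_1$ makes the two transported orders on this common set agree; this is word-for-word the corresponding step of Proposition~\ref{prop:extbto1}. \emph{The insertion gap.} By Lemma~\ref{lemma:c_0}(1) the image of $c_0$ lies in $Ah_{\sf M}$, so $\Delta_y\neq\emptyset$ forces $y\in Ah_{\sf M}$; since then $hy\notin Ah_{\sf M}$ for $h\in H-A$, we get $\Delta_{hy}=\emptyset$, i.e. $hy\prec_1 hyh_{\sf min}$ in $\mF_1(X)$. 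Combined with a short normal-form check that $h\Delta_y$ is disjoint from $\mF_1(X)$ for $h\in H-A$, this shows that the only place a compatible $<_2$ can put $h\Delta_y$ is the empty open interval $(hy,\,hyh_{\sf min})$ of $(\mF_1(X),<_1)$, and Lemma~\ref{lemma:c_0}(2) guarantees that all pairs $(h,y)$ producing that interval produce the same block. Hence any compatible extension $<_2$ of $<_1$ is forced, and is unique.

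Conversely, the prescription does define a compatible total order: for $x,x'\in\mF_2(X)-\mF_1(X)$, writing $x=hz$, $x'=h'z'$ with $h,h'\in H-A$ and $z,z'\in\mF_1(X)-\mF_0(X)$,
\[
x<_2x'\iff hc_0(z)<_1 h'c_0(z'),\quad\text{or}\quad hc_0(z)=h'c_0(z')\ \text{and}\ z<_1(h^{-1}h')z',
\]
where $hc_0(z)=h'c_0(z')$ forces $h^{-1}h'\in A$ (as $c_0$ takes values in $Ah_{\sf M}$), so $(h^{-1}h')z'\in\mF_1(X)$ and the right-hand side makes sense; on $\mF_1(X)$ one keeps $<_1$, and a new element $hz$ is placed just above $hc_0(z)$. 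One then checks trichotomy, transitivity and compatibility exactly as for formula~(\ref{eqn:o1}). The main obstacle is the gap analysis above: verifying with normal forms in $G*_AH$ that every new block $h\Delta_y$ is disjoint from $\mF_1(X)$ and falls between two $<_1$-consecutive elements — so that ``inserting'' literally makes sense — and then checking transitivity of the displayed relation across distinct blocks. Everything else transcribes Proposition~\ref{prop:extbto1}.
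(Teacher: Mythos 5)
Your proposal is correct and follows essentially the same route as the paper's proof: the same decomposition $\mF_{2}(X)=\mF_{1}(X)\cup\bigcup h\Delta_{y}$, the same transported block orders, Lemma \ref{lemma:c_0} used in the same two places (to locate the unique gap $hy\prec_{1}hyh_{\sf min}$ with $y\in Ah_{\sf M}$, and to show the inserted block is independent of the pair $(h,y)$), and the same resulting formula (\ref{eqn:o2}). The only differences are cosmetic — you spell out the disjointness of $h\Delta_{y}$ from $\mF_{1}(X)$ and the well-definedness computation slightly more explicitly than the paper, which simply refers back to Proposition \ref{prop:extbto1}.
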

\begin{proof}

For $h \in H$ and $y \in \mF_{1}(X)$, we regard $h\Delta_{y}$ as a totally ordered set equipped with a total ordering $<_{2}$ defined by $hx <_{2}hx'$ $(x,x' \in \Delta_{y})$ if and only if $x <_{1} x'$. By the same argument as 
Proposition \ref{prop:extbto1}, this ordering is well-defined on each subset $h\Delta_{y}$.

$\mF_{2}(X) = \mF_{1}(X) \cup \left(\bigcup h\Delta_{y} \right)$ so we construct the desired ordering $<_{2}$ by inserting ordered set $h\Delta_{y}$ into $\mF_{1}(X)$, as we have done in Proposition \ref{prop:extbto1}.

By the compatibility requirement, for $x \in \Delta_{y}$ and $h \in H$, a desired extension $<_{2}$ must satisfy 
\[ hy <_{2} hx <_{2} hyh_{\sf min} \]
so we need to insert $h \Delta_{y}$ between $hc_{0}(x)$ and $h c_{0}(x) h_{\sf min}$. By Lemma \ref{lemma:c_0} (1), $\Delta_{y}$ is empty unless $y=ah_{\sf M}$ for some $a \in A$, and that if $\Delta_{y}$ is non-empty then $hy \prec_{1} hyh_{\sf min}$ for $h \in H-A$. That is, there are no elements of $\mF_{1}(X)$ between $hy$ and $hyh_{\sf min}$. Moreover, Lemma \ref{lemma:c_0} (2) shows that an ordered set $h\Delta_{y}$ inserted between $hy$ and $hyh_{\sf min}$ must be unique.

Thus, the process of inserting $h\Delta_{y}$ to $\mF_{1}(X)$ is unique, and we get a well-defined compatible ordering $<_{2}$. Figure \ref{fig:o2} gives schematic illustration of the inserting process.

\begin{figure}[htbp]
 \begin{center}
\includegraphics*[width=100mm]{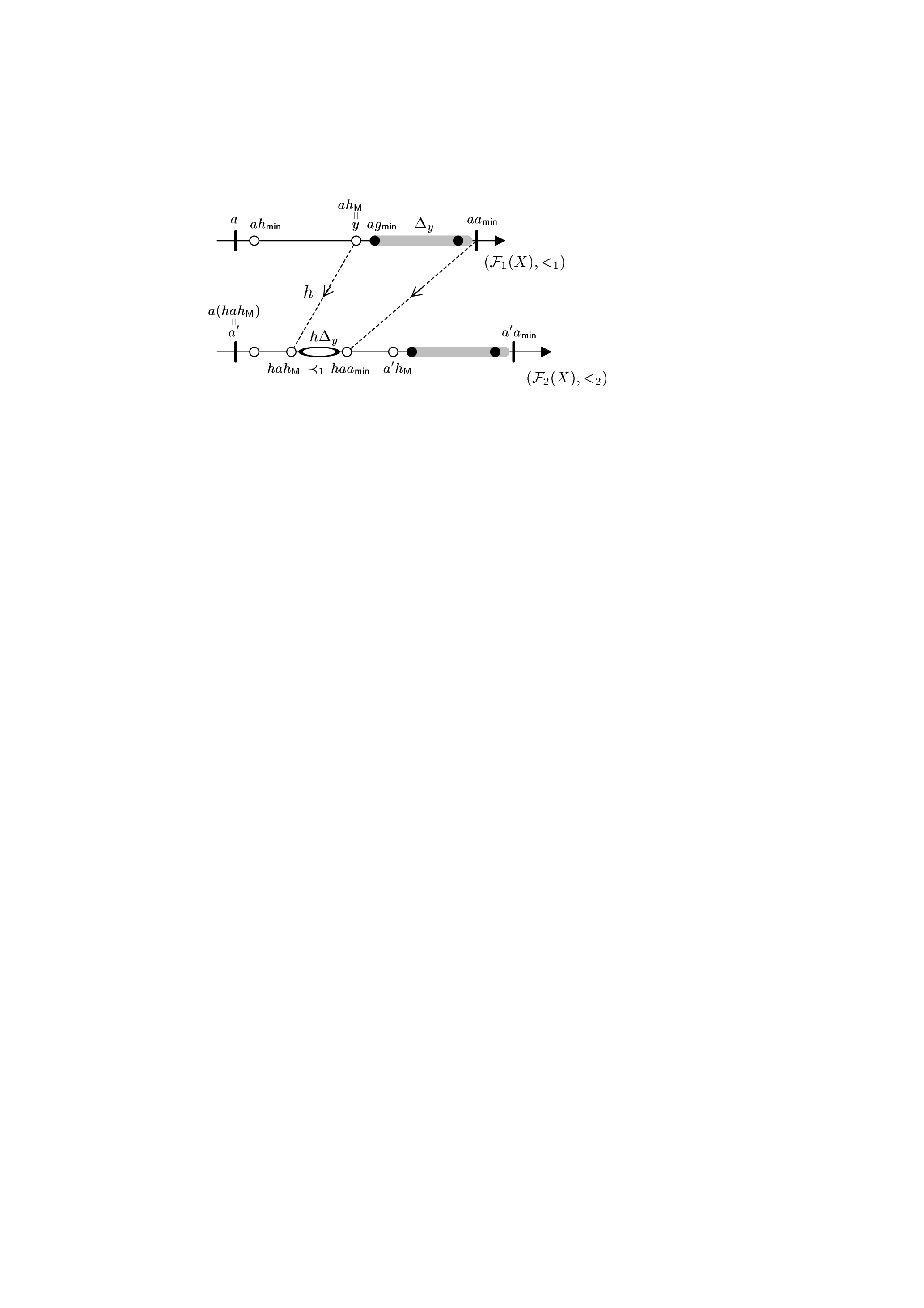}
\caption{Ordering $<_{2}$: Inserting $h\Delta_{y}$ between $hy=hah_{\sf M}$ and $hyh_{\sf min} = haa_{\sf min}$.}
\label{fig:o2}
\end{center}
\end{figure}

As a consequence, the ordering $<_{2}$ is given as follows:
For $x = hy$ and $ x'=h'y'$ $(h \in H, y \in \mF_{1}(X))$, we have
\begin{gather}
\label{eqn:o2}
x <_{2} x' \iff 
\begin{cases}
hc_{0}(y) <_{1} h'c_{0}(y') \text{, or,}\\
hc_{0}(y)=h'c_{0}(y') \text{ and } y <_{1} (h^{-1}h')y'.
\end{cases}
\end{gather}

Note that $hc_{0}(y)=h'c_{0}(y')$ implies $h^{-1}h' \in A$ as we have seen in the proof of Lemma \ref{lemma:c_0} (2), so the inequality $ y <_{1} (h^{-1}h')y' \in \mF_{1}(X)$ makes sense.

\end{proof}

Now we inductively extend compatible orderings. Assume that we have defined a compatible ordering $<_{i}$ of $\mF_{i+1}$. We define the map $c_{i-1}:\mF_{i}(X)-\mF_{i-1}(X) \rightarrow \mF_{i-1}(X)$ by
\[ c_{i-1}(x)=\max_{<_{i}} \{y \in \mF_{i-1}(X) \: | \: y <_{i} x \}\]
 and for $y \in \mF_{i-1}(X)$, we put 
\[ \Delta_{y} =\{ x \in \mF_{i}(X)-\mF_{i-1}(X) \: | \: c_{i-1}(x)=y\}. \]
Here we have assumed that $c_{i-1}$ is well-defined, that is, the maximal exists.

We will say that $<_{i}$ satisfies the \emph{Ping-Pong property} if the ordering $<_{i}$ satisfies the following three properties.

\begin{description}
\item[P1] The maps $c_{i-1}$ and $c_{i-2}$ satisfy the equality
\begin{gather*}
%\label{eqn:pp}
 c_{i-1}(x) = 
\begin{cases}
 gc_{i-2}(y) & (x=gy, g \in G-A, y \in \mF_{i-1}(X), \text{ if } i \text{ is odd})\\
 hc_{i-2}(y) & (x=hy, h \in H-A, y \in \mF_{i-1}(X), \text{ if } i \text{ is even}).
\end{cases}
\end{gather*}
Moreover, $c_{i-1}(x) \in \mF_{i-2}(X)-\mF_{i-3}(X)$.
\item[P2] $c_{i-1}(x) \prec_{i-1} c_{i-1}(x)h_{\sf min}$.
\item[P3] If $x \in \mF_{i}(X)-\mF_{i-2}(X)$, $x \prec_{i} xh_{\sf min}$.
\end{description}

The reason why we call these properties ``Ping-Pong'' will be explained in Remark \ref{rem:ping-pong}. Note that Ping-Pong property {\bf [P2]} shows that
\begin{equation}
\label{eqn:delta} \Delta_{y} =\{ x \in \mF_{i}(X)-\mF_{i-1}(X) \: | \: y <_{i} x <_{i} yh_{\sf min}\}. 
\end{equation}

\begin{lemma}
The ordering $<_2$ satisfies the Ping-Pong property.
\end{lemma}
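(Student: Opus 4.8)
The plan is to verify the three conditions \textbf{[P1]}, \textbf{[P2]}, \textbf{[P3]} for $i=2$ directly from the explicit descriptions of $<_{1}$ and $<_{2}$ obtained in Propositions \ref{prop:extbto1} and \ref{prop:ext1to2}, together with the formula for $c_{0}$ in Lemma \ref{lemma:c_0}(1). The key computational inputs are: the identity $c_{0}(gh)=a(ga(h))h_{\sf M}$; the fact that $h_{\sf M}h_{\sf min}=a_{\sf min}$ so that $c_{0}(x)h_{\sf min}=a(ga(h))a_{\sf min}$; and the structure of $\Delta_{y}$ from \eqref{eqn:delta}. Throughout I will write elements of $\mF_{2}(X)-\mF_{1}(X)$ as $x=hy$ with $h\in H-A$ and $y\in\mF_{1}(X)$, and elements of $\mF_{1}(X)$ as $y=gh'$ with $g\in G$, $h'\in H$.

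\textbf{[P1].} Here $i=2$ is even, so I must show $c_{1}(hy)=hc_{0}(y)$ for $h\in H-A$, $y\in\mF_{1}(X)$, and that $c_{1}(hy)\in\mF_{0}(X)-\mF_{-0.5}(X)=H-A$. The second claim is immediate once the first is established, since $c_{0}(y)=a(\cdot)h_{\sf M}\in H-A$ and $h\in H-A$ forces $hc_{0}(y)\in H-A$ (it cannot lie in $A$ because then $c_{0}(y)\in A$, contradicting $c_{0}(y)=a h_{\sf M}\notin A$). For the first claim, by definition $c_{1}(hy)=\max_{<_{2}}\{z\in\mF_{1}(X)\mid z<_{2}hy\}$. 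Using the description \eqref{eqn:o2}, an element $z=h''y''\in\mF_{1}(X)$ (so $y''\in\mF_{0}(X)$, i.e.\ $c_{0}(y'')=y''$... ) satisfies $z<_{2}hy$ iff $h''c_{0}(y'')<_{1}hc_{0}(y)$ or [$h''c_{0}(y'')=hc_{0}(y)$ and $y''<_{1}(h''^{-1}h)y$]; since $hc_{0}(y)\in\mF_{1}(X)$ is itself a candidate $z$ and it is the largest one below $hy$ by the unique-insertion picture of Figure \ref{fig:o2}, I get $c_{1}(hy)=hc_{0}(y)$. I will phrase this using \eqref{eqn:delta} for $i=2$ directly: $\Delta_{hc_0(y)}$ is inserted between $hc_0(y)$ and $hc_0(y)h_{\sf min}$, and $hy\in\Delta_{hc_0(y)}$, so the predecessor in $\mF_{1}(X)$ is exactly $hc_0(y)$.

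\textbf{[P2] and [P3].} For \textbf{[P2]} I must show $c_{1}(x)\prec_{1}c_{1}(x)h_{\sf min}$, i.e.\ that no element of $\mF_{1}(X)$ lies strictly between $c_1(x)$ and $c_1(x)h_{\sf min}$. Writing $c_{1}(x)=hc_{0}(y)=h\,a(ga(h'))h_{\sf M}$ with $y=gh'$, this is the statement that $w\prec_{1}wh_{\sf min}$ whenever $w=bh_{\sf M}$ for some $b\in A$ (here $b=h\,a(ga(h'))\in HA\cap H$, so $w=bh_{\sf M}$ with $b\in H$ and in fact one checks $b\in A$ is \emph{not} needed — what is needed is the local picture near $bh_{\sf M}$ in $<_{1}$). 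This is precisely the content already extracted in the proof of Proposition \ref{prop:ext1to2}: ``if $\Delta_{y}$ is non-empty then $hy\prec_{1}hyh_{\sf min}$'', applied with $y=a(ga(h'))h_{\sf M}$, which is non-empty because it contains $x$. So \textbf{[P2]} is a restatement of a fact proved en route to defining $<_{2}$. For \textbf{[P3]}, given $x\in\mF_{2}(X)-\mF_{0}(X)$, I split into $x\in\mF_{1}(X)-\mF_{0}(X)$ and $x\in\mF_{2}(X)-\mF_{1}(X)$. In the first case $x=gh$ with $g\in G-A$, and $x\prec_{1}xh_{\sf min}$ is again the ``$hy\prec_1 hyh_{\sf min}$'' fact from Proposition \ref{prop:extbto1}/\ref{prop:ext1to2}. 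In the second case $x=hy\in\Delta_{c_0(y)}$-type element; by the construction of $<_{2}$ the successor of $x$ in $\mF_{2}(X)$ is $h\cdot(\text{successor of }y\text{ in }\mF_{1}(X))=hyh_{\sf min}$ provided $y\prec_{1}yh_{\sf min}$, and $y\prec_1 yh_{\sf min}$ holds by applying \textbf{[P3]}-type reasoning one level down (i.e.\ $y\in\mF_{1}(X)-\mF_{-1}(X)$... ) — more carefully, $y$ ranges over elements with $hy\in\mF_2-\mF_1$, forcing $y\in H-A$ after inserting, hmm; I will instead argue that inside the inserted block $h\Delta_{c_0(x)}$ the ordering $<_{2}$ is order-isomorphic to $\Delta_{c_0(x)}$ with $<_{1}$, and the top of that block is followed immediately by $hc_{0}(x)h_{\sf min}=xh_{\sf min}$ by the unique-insertion property, while within the block consecutive elements correspond to consecutive elements of $(\mF_1,<_1)$, which are of the form $z\prec_1 zh_{\sf min}$ by the level-1 statement.

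\textbf{Main obstacle.} The only delicate point is bookkeeping: matching up which of the ``no element lies between $w$ and $wh_{\sf min}$'' statements proved inside Propositions \ref{prop:extbto1} and \ref{prop:ext1to2} applies at which position, and making sure the case $x\in\mF_2(X)-\mF_1(X)$ in \textbf{[P3]} really reduces to the corresponding $\prec_1$ statement rather than circularly to \textbf{[P3]} itself. I expect the cleanest route is to first record a short sub-lemma — ``for $w\in\mF_{1}(X)$, $w\prec_{1}wh_{\sf min}$ whenever $w\in H$ or $w=g a$ with $a\in A$'' (the two ``gap'' situations visible in Figures \ref{fig:base} and \ref{fig:o1}) — extracted verbatim from the earlier proofs, and then feed it mechanically into \textbf{[P1]}--\textbf{[P3]}. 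With that sub-lemma in hand the verification is routine substitution.
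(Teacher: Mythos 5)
Your overall strategy---reading \textbf{[P1]}--\textbf{[P3]} off the explicit descriptions (\ref{eqn:o1}), (\ref{eqn:o2}) and the unique-insertion pictures---is exactly the paper's, and your identification of $c_{1}(hy)=hc_{0}(y)$ and of \textbf{[P2]} as the gap statement already recorded in the proof of Proposition \ref{prop:ext1to2} is correct. However, two of your supporting claims are false, one of them with a genuinely invalid justification. In \textbf{[P1]} you assert $c_{1}(x)\in H-A$, arguing that $hc_{0}(y)\in A$ would force $c_{0}(y)\in A$. It would not: since $h\in H-A$, the relation $hc_{0}(y)\in A$ only gives $c_{0}(y)\in h^{-1}A$, which is \emph{disjoint} from $A$, so there is no contradiction. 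Indeed $c_{1}(x)=ha''h_{\sf M}$ (with $a''=a(ga(h'))$) lies in $A$ precisely when $h\in Ah_{\sf min}a_{\sf min}^{-1}a''^{-1}$, and such $h\in H-A$ exist whenever $H\neq A$. What \textbf{[P1]} actually demands is only $c_{1}(x)\in\mF_{0}(X)-\mF_{-1}(X)=H$ (note $\mF_{-1}(X)=\emptyset$, not $A$); you have misread the target as $\mF_{0}(X)-\mF_{-0.5}(X)$ and then argued for a strengthening that is false.

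The second problem is your closing sub-lemma ``$w\prec_{1}wh_{\sf min}$ whenever $w\in H$'', which is false: for $w=ah_{\sf M}$ with $a\in A$ one has $wh_{\sf min}=aa_{\sf min}$, and the entire set $\{g\in G-A \mid a(g)=a\}$ (nonempty when $G\neq A$) lies between them in $<_{\sf base}$. The statement that is true, and that Proposition \ref{prop:ext1to2} actually uses, is $bh_{\sf M}\prec_{1}ba_{\sf min}$ for $b\in H-A$; the hypothesis $b\notin A$ does real work, whereas your parenthetical in \textbf{[P2]} dismisses it (``$b\in A$ is not needed''). For \textbf{[P2]} itself you are rescued because $b=ha(ga(h'))$ with $h\in H-A$ automatically avoids $A$, but the same phenomenon bites your \textbf{[P3]} in the case $x\in\mF_{1}(X)-\mF_{0}(X)$: for $x=gah_{\sf M}$ ($g\in G-A$, $a\in A$), $x$ is the top of the inserted block $g\Delta_{a}$ and its successor in $<_{1}$ is $gag_{\sf min}$, not $xh_{\sf min}=gaa_{\sf min}$. (The paper's own one-line claim ``$x\prec_{2}xh_{\sf min}$ if $x\notin H$'' has the same blind spot; a careful argument must either exclude these block-top elements or check that the induction in Proposition \ref{prop:proof} only invokes \textbf{[P2]}/\textbf{[P3]} at elements of the form $gc_{1}(\cdot)$ or $hc_{1}(\cdot)$, which avoid them.) As written, feeding your sub-lemma ``mechanically'' into \textbf{[P1]}--\textbf{[P3]} would certify statements that are false.
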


\begin{proof}
This is easily seen from the description (\ref{eqn:o2}) of $<_{2}$ (see Figure \ref{fig:o2} again).

For $x=hy \in \mF_{2}(X)-\mF_{1}(X)$ ($h \in H-A$, $y \in \mF_{1}(X)-\mF_{0}(X)$), 
 $hc_{0}(y) <_{2} hy$. There are no elements of $\mF_{1}(X)-\mF_{0}(X)$ that lie between $hc_{0}(y)$ and $hy$ so $c_{1}(x)=hc_{0}(y)$. In particular, $c_{1}(x) \in \mF_{0}(X)=H$ hence by definition of $<_{1}$ given in (\ref{eqn:o1}) (see Figure \ref{fig:o1} again), $c_{1}(x) \prec_{1} c_{1}(x)h_{\sf min}$. Moreover, the description (\ref{eqn:o2}) of $<_{2}$ shows
\begin{gather*}
\begin{cases}
x \prec_{2} x h_{\sf min} & \text{if } x \not \in H \\
x \prec_{2} xh_{\sf M}^{-1}g_{\sf min} & \text{if } x \in H.
\end{cases}
\end{gather*}
\end{proof}

The Ping-Pong property shows the counterparts of Lemma \ref{lemma:key0} and  \ref{lemma:c_0}.
\begin{lemma}
\label{lemma:pp1}
Assume that $<_{i}$ satisfies the Ping-Pong property and let $x,x'\in \mF_{i}(X)-\mF_{i-1}(X)$. 
\begin{itemize}
\item If $i$ is odd, then $gc_{i-1}(x) = g'c_{i-1}(x')$ $(g,g' \in G)$ implies $g\Delta_{c_{i-1}(x)} = g'\Delta_{c_{i-1}(x')}$.
\item If $i$ is even, then $hc_{i-1}(x) = h'c_{i-1}(x')$ $(h,h' \in H)$ implies $h\Delta_{c_{i-1}(x)} = h'\Delta_{c_{i-1}(x')}$.
\end{itemize}
\end{lemma}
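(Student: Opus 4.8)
The plan is to mimic the proof of Lemma~\ref{lemma:key0} and Lemma~\ref{lemma:c_0}(2), using the Ping-Pong property to reduce everything to the statement that the coset $g^{-1}g'$ (resp. $h^{-1}h'$) lies in $A$ and acts on the relevant $\Delta$-set by left translation. I would treat the odd case in detail; the even case is obtained by interchanging the roles of $G$ and $H$.

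First I would unwind \textbf{[P1]}. Suppose $i$ is odd and write $x = gy$, $x' = g'y'$ with $g,g' \in G-A$ and $y,y' \in \mF_{i-1}(X)$. By \textbf{[P1]}, $c_{i-1}(x) = gc_{i-2}(y)$ and $c_{i-1}(x') = g'c_{i-2}(y')$, and moreover $c_{i-1}(x), c_{i-1}(x') \in \mF_{i-2}(X)-\mF_{i-3}(X)$. The hypothesis $gc_{i-1}(x) = g'c_{i-1}(x')$ then reads $g^2 c_{i-2}(y) = (g')^2 c_{i-2}(y')$, so that $(g^2)^{-1}(g')^2 = c_{i-2}(y)c_{i-2}(y')^{-1}$; but the right-hand side is a product of an element of $\mF_{i-2}(X)$ with the inverse of an element of $\mF_{i-2}(X)$, and I want to conclude it lies in $A$. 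Here I should be a little careful: the clean statement is the one that appears implicitly in the earlier inductive steps, namely that if two elements of $G\cdot\mF_{i-1}(X)$ agree after premultiplying by a common $G$-factor then the ambiguity is already in $A$ --- this is exactly the normal-form/\,amalgam-length bookkeeping that the filtration $\mF_i$ is designed to encode. So from $gc_{i-1}(x) = g'c_{i-1}(x')$ with $c_{i-1}(x),c_{i-1}(x') \in \mF_{i-2}(X)-\mF_{i-3}(X)$, normal form in $G*_A H$ forces $g^{-1}g' \in A$ (the leading $G$-syllables must cancel into $A$), and then $c_{i-1}(x) = (g^{-1}g')\,c_{i-1}(x')$.

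Next I would transport the $\Delta$-sets. By the definition of $\Delta$ together with \textbf{[P2]} and equation~(\ref{eqn:delta}),
\[
\Delta_{c_{i-1}(x)} = \{ z \in \mF_i(X)-\mF_{i-1}(X) \mid c_{i-1}(x) <_i z <_i c_{i-1}(x)h_{\sf min}\}.
\]
Let $t = g^{-1}g' \in A \subset G$. Left multiplication by $t$ sends $\mF_i(X)-\mF_{i-1}(X)$ to itself (since $t \in A$ preserves each filtration level), and by compatibility of $<_i$ it sends the open interval $(c_{i-1}(x'), c_{i-1}(x')h_{\sf min})$ order-isomorphically onto $(tc_{i-1}(x'), tc_{i-1}(x')h_{\sf min}) = (c_{i-1}(x), c_{i-1}(x)h_{\sf min})$, using $tc_{i-1}(x') = c_{i-1}(x)$ and $t h_{\sf min}\cdots$ --- more precisely, $t(c_{i-1}(x')h_{\sf min}) = (tc_{i-1}(x'))h_{\sf min} = c_{i-1}(x)h_{\sf min}$. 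Hence $t\,\Delta_{c_{i-1}(x')} = \Delta_{c_{i-1}(x)}$, i.e. $(g^{-1}g')\Delta_{c_{i-1}(x')} = \Delta_{c_{i-1}(x)}$, which is precisely $g\Delta_{c_{i-1}(x)} = g'\Delta_{c_{i-1}(x')}$. The even case is identical with $G,H$ swapped and $g_{\sf min}$ replaced throughout by $h_{\sf min}$ in the roles they play.

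The main obstacle I anticipate is the normal-form step: extracting $g^{-1}g' \in A$ cleanly from $gc_{i-1}(x) = g'c_{i-1}(x')$. One cannot simply cancel, because $c_{i-1}(x)$ is an arbitrary element of $\mF_{i-2}(X)-\mF_{i-3}(X)$; the correct argument is that in $G*_A H$ an equality of the form $g u = g' u'$ with $g,g' \in G$ and $u, u'$ having the same amalgam-syllable length and leading syllable \emph{not} in $G$ (which is what membership in $\mF_{i-2}(X)-\mF_{i-3}(X)$ guarantees for odd $i$) forces $g^{-1}g'$ to be an element of $A$ absorbed into the leading syllable. This is the same bookkeeping already used silently in Lemma~\ref{lemma:key0} and Lemma~\ref{lemma:c_0}(2), so I would phrase it by direct analogy to those proofs rather than reproving the normal-form theorem; once it is in hand, the rest is a one-line application of left-invariance (compatibility) of $<_i$.
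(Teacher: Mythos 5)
There is a genuine gap, and it sits exactly at the step you flagged as the main obstacle: the claim that $g^{-1}g'\in A$. That claim is incorrectly justified and is false in general. Your justification asserts that membership of $c_{i-1}(x)$ in $\mF_{i-2}(X)-\mF_{i-3}(X)$ guarantees, for odd $i$, a leading syllable \emph{not} in $G$; the parity is backwards. For odd $i$ the index $i-2$ is also odd and $\mF_{i-2}(X)=G\mF_{i-3}(X)$, so elements of $\mF_{i-2}(X)-\mF_{i-3}(X)$ have leading syllable in $G-A$. Writing $c_{i-1}(x)=g_{0}w$ and $c_{i-1}(x')=g_{0}'w'$ with $g_{0},g_{0}'\in G-A$, the normal form only gives $(gg_{0})^{-1}(g'g_{0}')\in A$, i.e.\ $g^{-1}g'\in g_{0}Ag_{0}'^{-1}$, which lies in $G$ but need not lie in $A$. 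Equivalently: the hypothesis gives $g^{-1}g'=c_{i-1}(x)c_{i-1}(x')^{-1}$, and unlike in Lemma \ref{lemma:c_0}(2), where $c_{0}$ takes values in the single coset $Ah_{\sf M}$ so that this product really is in $A$, here $c_{i-1}$ ranges over all of $\mF_{i-2}(X)-\mF_{i-3}(X)$ and the product is only an element of $G$. Since your transport step then uses ``$t\in A$ preserves each filtration level'' to see that left multiplication by $t$ carries $\mF_{i}(X)-\mF_{i-1}(X)$ to itself, the gap propagates into the second half of the argument. (There is also a notational clash early on: the $g,g'$ of the hypothesis are unrelated to the $g,g'$ in a decomposition $x=gy$, $x'=g'y'$, so the equation $g^{2}c_{i-2}(y)=(g')^{2}c_{i-2}(y')$ is not what the hypothesis says.)

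The paper's proof shows the detour through $A$ is unnecessary, and is purely order-theoretic. Put $y=c_{i-1}(x)$, $y'=c_{i-1}(x')$, and take $z'\in\Delta_{y'}$, which by \textbf{[P2]} and (\ref{eqn:delta}) means $y'<_{i}z'<_{i}y'h_{\sf min}$. The hypothesis gives $(g^{-1}g')y'=y$ with $g^{-1}g'$ merely an element of $G$; since $i$ is odd, $G\mF_{i}(X)=\mF_{i}(X)$, so compatibility of $<_{i}$ applies to left multiplication by $g^{-1}g'$ and yields $y<_{i}(g^{-1}g')z'<_{i}yh_{\sf min}$, while \textbf{[P2]} (no elements of $\mF_{i-1}(X)$ strictly between $y$ and $yh_{\sf min}$) prevents $(g^{-1}g')z'$ from landing in $\mF_{i-1}(X)$. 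Hence $(g^{-1}g')z'\in\Delta_{y}$ and $g'z'\in g\Delta_{y}$; the reverse inclusion is symmetric. Your instinct to mimic Lemma \ref{lemma:key0} and Lemma \ref{lemma:c_0}(2) is precisely what leads astray here: those arguments depend on the relevant elements lying in a single coset of $A$, a feature that is lost at the general inductive step.
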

\begin{proof}
We show the case $i$ is odd. The case $i$ is even is similar.
Put $y= c_{i-1}(x)$ and $y'=c_{i-1}(x')$, respectively. 
We show $g' \Delta_{y'} \subset g \Delta_{y}$. The converse inclusion is proved similarly. By (\ref{eqn:delta}), $z' \in \Delta_{y'}$ if and only if $y' <_{i-1} z' <_{i-1} y'h_{\sf min}$. By compatibility,
\[ y = g^{-1}g'y' <_{i-1} (g^{-1}g')z' <_{i-1} g^{-1}g'y'h_{\sf min} = yh_{\sf min} \]
so $(g^{-1}g') z' \in \Delta_{y}$. This proves $g'z' \in g\Delta_{y}$.
\end{proof}

The following proposition completes the construction of isolated ordering $<_{X}$.

\begin{proposition}
\label{prop:proof}
If $<_{i}$ $(i>1)$ is a compatible ordering with the Ping-Pong property, then there exists a unique compatible ordering $<_{i+1}$ on $\mF_{i+1}(X)$ that extends $<_{i}$. Moreover, this compatible ordering $<_{i+1}$ also satisfies the Ping-Pong property.
\end{proposition}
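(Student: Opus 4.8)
The plan is to mimic the proofs of Propositions~\ref{prop:ext1to2} and~\ref{prop:extbto1} essentially verbatim, using the Ping-Pong property of $<_{i}$ as the replacement for the ad hoc facts (Lemma~\ref{lemma:key0}, Lemma~\ref{lemma:c_0}) that were used at the base of the induction. Concretely, write $\mF_{i+1}(X)=KF_{i}(X)$ where $K=G$ if $i+1$ is odd and $K=H$ if $i+1$ is even, and assume from now on $i+1$ is odd (the even case is identical after swapping the roles of $G$ and $H$). First I would record that $\mF_{i+1}(X)-\mF_{i}(X)$ decomposes as the disjoint union of the cosets $k\Delta_{y}$ with $k\in K-A$ and $y\in\mF_{i-1}(X)$, where $\Delta_{y}$ is the set attached to $<_{i}$ via $c_{i-1}$; the Ping-Pong property {\bf [P1]} guarantees $\Delta_{y}$ is nonempty only when $y\in\mF_{i-1}(X)-\mF_{i-2}(X)$ lies in the appropriate level. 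On each such coset I would define the ordering $<_{i+1}$ by $kz<_{i+1}kz'\iff z<_{i}z'$ for $z,z'\in\Delta_{y}$, and check well-definedness exactly as in Proposition~\ref{prop:extbto1}: if $k\Delta_{y}=k'\Delta_{y'}$ then $k^{-1}k'\in A$, so left-multiplication by $k^{-1}k'$ is $<_{i}$-order-preserving on $\mF_{i-1}(X)$ by compatibility, and the two definitions agree.

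Next I would establish the uniqueness/insertion step. By compatibility any extension $<_{i+1}$ must satisfy, for $z\in\Delta_{y}$, the chain $ky<_{i+1}kz<_{i+1}kyh_{\sf min}$; by Ping-Pong {\bf [P2]} applied to $<_{i}$ we have $y\prec_{i}yh_{\sf min}$ in $\mF_{i-1}(X)$, hence also $ky\prec_{i} kyh_{\sf min}$, i.e.\ there are no elements of $\mF_{i}(X)$ strictly between $ky$ and $kyh_{\sf min}$. (Here I must check $ky,kyh_{\sf min}\in\mF_{i}(X)$: indeed $y\in\mF_{i-1}(X)$ so $ky\in K\mF_{i-1}(X)\subset\mF_{i+1}(X)$, and more precisely $ky\in\mF_{i}(X)$ because $c_{i-1}$ lands in $\mF_{i-2}(X)-\mF_{i-3}(X)$ by {\bf [P1]}, so $y=c_{i-1}(z)$ sits at level $i-2$ and $ky$ at level $\le i$.) Therefore the whole ordered set $k\Delta_{y}$ must be inserted into the single "gap" between $ky$ and $kyh_{\sf min}$; and Lemma~\ref{lemma:pp1} (the Ping-Pong analogue of Lemma~\ref{lemma:c_0}(2)) shows that whenever $ky=k'y'$ with $y=c_{i-1}(z)$, $y'=c_{i-1}(z')$, one has $k\Delta_{y}=k'\Delta_{y'}$, so these insertions are mutually consistent and the resulting $<_{i+1}$ is forced. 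This yields the explicit formula, for $x=ky$, $x'=k'y'$ with $k,k'\in K$, $y,y'\in\mF_{i}(X)$,
\begin{gather*}
x<_{i+1}x'\iff
\begin{cases}
kc_{i-1}(y)<_{i}k'c_{i-1}(y')\text{, or,}\\
kc_{i-1}(y)=k'c_{i-1}(y')\text{ and }y<_{i}(k^{-1}k')y',
\end{cases}
\end{gather*}
where in the second case $k^{-1}k'\in A$ so $(k^{-1}k')y'\in\mF_{i}(X)$ and the inequality makes sense.

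Finally I would verify that $<_{i+1}$ again satisfies the Ping-Pong property, which is what makes the induction go through. For {\bf [P1]}: from the construction, for $x=ky\in\mF_{i+1}(X)-\mF_{i}(X)$ the element $kc_{i-1}(y)\in\mF_{i}(X)$ is the largest element of $\mF_{i}(X)$ below $x$ (no elements of $\mF_{i}(X)$ lie between $kc_{i-1}(y)=ky$-gap-bottom and $x$), so $c_{i}(x)=kc_{i-1}(y)$; and since $c_{i-1}(y)\in\mF_{i-2}(X)-\mF_{i-3}(X)$ by the inductive {\bf [P1]}, we get $c_{i}(x)=kc_{i-1}(y)\in K\mF_{i-2}(X)$, which lies in $\mF_{i-1}(X)-\mF_{i-2}(X)$ as required (using $k\notin A$). {\bf [P2]} for $<_{i+1}$: $c_{i}(x)=kc_{i-1}(y)\in\mF_{i-1}(X)$, and left-multiplying the relation $c_{i-1}(y)\prec_{i-1}c_{i-1}(y)h_{\sf min}$ (inductive {\bf [P2]}) — which by inductive {\bf [P3]} is equivalent to saying $\Delta$-gaps at level $i-1$ have width given by $h_{\sf min}$ — shows $c_{i}(x)\prec_{i}c_{i}(x)h_{\sf min}$. {\bf [P3]} for $<_{i+1}$: for $x\in\mF_{i+1}(X)-\mF_{i-1}(X)$, the explicit formula shows $x$'s successor is obtained by moving up one step inside the coset $k\Delta_{y}$, which corresponds under $kz\mapsto z$ to the successor of $z$ in $\Delta_{y}\subset\mF_{i}(X)-\mF_{i-1}(X)$, and by inductive {\bf [P3]} that successor is $zh_{\sf min}$, so $x\prec_{i+1}xh_{\sf min}$; the boundary case $z$ maximal in $\Delta_{y}$ cannot occur because {\bf [P2]} says the gap is exactly $(ky,kyh_{\sf min})$ with $kyh_{\sf min}\in\mF_{i}(X)$, forcing $\Delta_{y}$ to have no top element — one checks instead that the successor stays inside the coset. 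The main obstacle I anticipate is precisely this bookkeeping of which filtration level each of $y$, $c_{i-1}(y)$, $ky$, $kc_{i-1}(y)$ lives in, and confirming that the parity-dependent statements in {\bf [P1]}–{\bf [P3]} transform correctly under the shift $i\rightsquigarrow i+1$; the order-theoretic content is a routine repetition of Proposition~\ref{prop:ext1to2}, but the "no elements of $\mF_{i}(X)$ between $\ldots$" claims must be justified carefully from {\bf [P2]} and {\bf [P3]} at level $i$ rather than re-derived by hand.
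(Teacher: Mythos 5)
Your argument is correct and follows the paper's proof essentially verbatim: the same decomposition of $\mF_{i+1}(X)-\mF_{i}(X)$ into cosets $k\Delta_{y}$, the same insertion argument with uniqueness supplied by Lemma~\ref{lemma:pp1}, the same explicit formula, and the same inductive verification of {\bf [P1]}--{\bf [P3]}. The one step to tighten is ``$y\prec_{i-1}yh_{\sf min}$ hence $ky\prec_{i}kyh_{\sf min}$'': this does not follow from compatibility alone (left translation by $k$ need not carry gaps of $\mF_{i-1}(X)$ to gaps of $\mF_{i}(X)$), but is exactly what {\bf [P3]} for $<_{i}$ supplies, applied to $ky\in\mF_{i}(X)-\mF_{i-2}(X)$ --- as you yourself flag in your closing remark.
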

\begin{proof}

The construction of $<_{i+1}$ is almost the same as the construction of $<_{2}$. We  treat the case $i$ is even. The case $i$ is odd is similar.

We regard each $g\Delta_{y}$ $(y \in \mF_{i-1}(X), g\in G-A)$ as a totally ordered set, by equipping a total ordering $<_{i+1}$ defined by $gx <_{i+1} gx'$ $(x,x' \in \Delta_{y})$ if and only if $x<_{i}x'$. By the same argument as Proposition \ref{prop:extbto1}, the ordering $<_{i+1}$ is well-defined on each $g\Delta_{y}$.
The desired compatible ordering $<_{i+1}$ on $\mF_{i+1}(X)=\mF_{i}(X) \cup \left( \bigcup g \Delta_{y}\right)$ is obtained by inserting $g\Delta_{y}$ into $\mF_{i}(X)$.

By the Ping-Pong property {\bf [P2]}, for $y \in \mF_{i-1}(X)$ if $\Delta_{y}$ is non-empty, then $y \prec_{i-1} yh_{\sf min}$. Thus we need to insert $g\Delta_{y}$ between $gy$ and $gyh_{\sf min}$. By the Ping-Pong property {\bf [P3]}, $gy \prec_{i} gyh_{\sf min}$, so there are no elements of $\mF_{i}(X)$ between $gy$ and $gyh_{\sf min}$. Moreover, Lemma \ref{lemma:pp1} shows that there are exactly one ordered set of the form $g\Delta_{y}$ that should be inserted between $gy$ and $gyh_{\sf min}$. Therefore the process of insertions is unique, and the resulting ordering $<_{i+1}$ is given as follows: For $x=gy$ and $ x'=g'y'$, ($g,g' \in G$ and $y, y' \in \mF_{i}(X)$), we define
\begin{gather}
\label{eqn:oi+1}
x <_{i+1} x' \iff 
\begin{cases}
gc_{i-1}(y) <_{i} g'c_{i-1}(y') \text{, or,}\\
gc_{i-1}(y)=g'c_{i-1}(y') \text{ and } y <_{i} (g^{-1}g')y'.
\end{cases}
\end{gather}

Next we show that $<_{i+1}$ also satisfies the Ping-Pong property. We have inserted $x=gy \in \mF_{i+1}(X)-\mF_{i}(X)$ $(g\in G-A, y \in \mF_{i}(X))$ between $gc_{i-1}(y)$ and $gc_{i-1}(y)h_{\sf min}$. By definition of $<_{i+1}$, there are no elements of $\mF_{i}(X)$ that lie between $x$ and $gc_{i-1}(x)$, hence $c_{i}(x)=gc_{i-1}(y)$.
By {\bf [P1]} for $<_{i}$, $c_{i-1}(y) \in \mF_{i-2}(X)-\mF_{i-3}(X)$. Hence $c_{i}(x) = gc_{i-1}(y) \in \mF_{i-1}(X)-\mF_{i-2}(X)$ so $<_{i+1}$ satisfies {\bf [P1]}.
Moreover by {\bf [P3]} for $<_{i}$, $c_{i}(x) \in \mF_{i-1}(X)-\mF_{i-2}(X)$ implies that $c_{i}(x) \prec_{i} c_{i}(x)h_{\sf min}$ hence $<_{i+1}$ satisfies {\bf [P2]}.

Finally we show that $<_{i+1}$ satisfies {\bf [P3]}. Assume that $x \in \mF_{i+1}(X)-\mF_{i}(X)$, and put $x=gy$ ($g \in G-A$, $y\in \mF_{i}(X)-\mF_{i-1}(X)$). By {\bf [P3]} for $<_{i}$, we have $y \prec_{i} yh_{\sf min}$. Hence by definition of $<_{i+1}$ we have $x = gy \prec_{i+1} gyh_{\sf min} = xh_{\sf \min}$. 

If $x \in \mF_{i}(X)-\mF_{i-1}(X) \subset \mF_{i}-\mF_{i-2}(X)$, then by {\bf [P3]} for $<_{i}$ we have $x \prec_{i} xh_{\sf min}$.
No elements of $\mF_{i+1}(X)-\mF_{i}(X)$ are inserted between $x$ and $xh_{\sf min}$, hence $x \prec_{i+1} xh_{\sf min}$. 

\end{proof}

\begin{proof}[Proof of Theorem \ref{theorem:main}]
For $x,x' \in X$, we define the isolated ordering $<_{X}=<_{X}^{(1)}$ by
\[ x <_{X} x' \iff x <_{N} x' \]
where $N$ is chosen to be sufficiently large so that $x,x' \in \mF_{N}(X)$.
Proposition \ref{prop:proof} shows that $<_{X}$ is a well-defined left ordering of $X$. By Proposition \ref{prop:charbase}, $<_{X}$ is isolated with characteristic positive set 
\[\{g_1,\ldots,g_m, h_1,\ldots,h_n, h_{\sf min}a_{\sf min}^{-1}g_{\sf min}\}, \]
if $\{g_1,\ldots,g_m\}$ is a characteristic positive set of $<_{G}$ and $\{ h_1,\ldots,h_n\}$ is a characteristic positive set of $<_{H}$ relative to $A$.

It remains to show that $A$ is a stepping with respect to $<_{X}$.
To see this, for $x \in X$, define
\[a(x) = a \circ \cdots \circ c_{N-2}\circ c_{N}(x) \]
where $N$ is taken so that $x \in \mF_{N}(X)$.
By definition of $c_{i}$, $a(x) = \max_{<_{X}}\{a \in A \: | \: a\leq_{X} x\}$.
\end{proof}

\begin{remark}
\label{rem:ping-pong}
Here we explain why we call the properties {\bf [P1]}--{\bf [P3]} the Ping-Pong property. This may help to understand isolated ordering $<_{X}$ we constructed.

Let us divide $X-A$ into two disjoint subsets $\mathcal{E}$ and $\mathcal{O}$ as follows:
\begin{gather*}
\begin{cases}
\mathcal{E} =\bigcup_{a \in A}\{x \in X\: | \: a <_{X} x <_{X} ah_{\sf M} \}\\
\mathcal{O} =\bigcup_{a \in A}\{x \in X\: | \: ag_{\sf min} <_{X} x <_{X} aa_{\sf min}.\}
\end{cases}
\end{gather*}
By definition of $<_{\sf base}$, $\mF_{0}(X)-A = H-A \subset \mathcal{E}$, and by definition of $<_{1}$, 
 $\mF_{1}(X)-\mF_{0}(X) = GH-H \subset \mathcal{O}$. Now the Ping-Pong property {\bf [P1]} says that 
\begin{gather*}
\begin{cases}
g(\mF_{2i}(X)-\mF_{2i-1}(X)) \subset \mathcal{E} & (g \in G-A) \\
 h(\mF_{2i+1}(X)-\mF_{2i-1}(X)) \subset \mathcal{O} & (h \in H-A) \\
\end{cases}
\end{gather*}
Thus, we conclude
\begin{gather*}
\begin{cases}
\mathcal{E}=\{\textrm{even part}\} = \bigcup_{i} (\mF_{2i}(X)-\mF_{2i-1}(X))\\
\mathcal{O}=\{\textrm{odd part}\} = \bigcup_{i} (\mF_{2i+1}(X)-\mF_{2i}(X))\\
\end{cases}
\end{gather*}
and for $g \in G-A$ and $h \in H-A$, we have
\[ g (\mathcal{O}) \subset \mathcal{E},\ \ h(\mathcal{E}) \subset \mathcal{O}. \] 

Therefore the subsets $\mathcal{O}$ and $\mathcal{E}$ provides the setting of a famous Ping-Pong argument. The rest of the Ping-Pong properties {\bf [P2],[P3]}, as we have seen in the proof of Proposition \ref{prop:proof}, rather follows from {\bf [P1]}. This explains why we call the properties {\bf [P1]--[P3]} the Ping-Pong property.
\end{remark}

\begin{remark}
\label{rem:computation}
Here we briefly explain the computability of the resulting isolated ordering $<_{X}$. 

By (\ref{eqn:oi+1}), for $x \in \mF_{i+1}(X)-\mF_{i}(X)$, determining whether $1 <_{X} x$ (which is equivalent to $ 1 <_{i+1} x$) is reduced to the computation of $c_{i}(x)$ and the ordering $<_{i}$. 
By Ping-Pong property {\bf [P1]}, $c_{i}(x)$ is computed from the function $c_{i-1}$. Thus, eventually one can reduce to the computations of the base orderings $<_{G}$ and $<_{H}$ and the map $a:\mF_{0.5}(X) \rightarrow A$. That is, we have:\\
\emph{The ordering $<_{X}$ is algorithmically computable if and only if the orderings $<_{G}$, $<_{H}$ and the map $a:\mF_{0.5}(X) \rightarrow A$ are algorithmically computable.}

The problem may occur when we want to compute the map $a$. Even if we have a nice algorithm to compute $<_{G}$ and $<_{H}$, this does not guarantee an algorithm to compute the map $a$, in general because it involves the maximum.
\end{remark}

Finally we study convex subgroups. Recall that a subset $C$ of a totally ordered set $(S,<_{S})$ is \emph{convex} if $c \leq_{S} s \leq_{S}c'$ $(c,c' \in C, s \in S)$ implies $s \in C$. For a subset $T$ of $(S,<_{S})$ the \emph{convex hull} $\Conv_{S}(T)$ of $T$ in $S$ is the minimum convex subset that contains $T$. Namely,
\begin{eqnarray*}
\Conv_{S}(T) & = & \bigcap_{\{C\supset T: \text{convex} \}} C\\
& = & \{s \in S \: | \: \exists t,t'\in T, t \leq_{S} s \leq_{S} t' \}.
\end{eqnarray*}

Let $(G,<_{G})$ be a left-ordered group and let $A$ be a subgroup of $G$. We denote the restriction of $<_{G}$ on $A$ by $<_{A}$. We say a convex subgroup $B$ of $(A,<_{A})$ is a \emph{$(G,<_{G})$-strongly convex} if $\Conv_{G}(B)$ is a subgroup of $G$.

\begin{proposition}
\label{prop:convex}
Let $<_{X}$ be an isolated ordering on $X=G*_{A}H$ in Theorem \ref{theorem:main}.
If a subgroup $B$ of $A$ is both $(G,<_{G})$- and $(H,<_{H})$-strongly convex, then $B$ is $(X,<_{X})$-strongly convex. In particular, if $B$ and $B'$ are different then $\Conv_{X}(B)$ and $\Conv_{X}(B')$ yield different convex subgroups of $(X,<_{X})$.
\end{proposition}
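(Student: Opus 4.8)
The plan is to show directly that $\Conv_X(B)$ is a subgroup, and that it coincides with the "obvious" candidate built from the convex hulls in $G$ and $H$. Write $B_G = \Conv_G(B)$ and $B_H = \Conv_H(B)$; by hypothesis these are subgroups of $G$ and $H$ respectively, both containing $B$ and both having the same restricted ordering on $A$ as each other (namely $<_A$ restricted to the convex hull of $B$). First I would form the subgroup $Y = B_G *_B B_H \leq X$ (it embeds in $X$ because $B_G \cap A = B_H \cap A = B$, so this is a genuine sub-amalgam), and argue that $Y$ is exactly $\Conv_X(B)$. One containment, $\Conv_X(B) \subseteq Y$, requires showing $Y$ is convex in $(X,<_X)$; the reverse $Y \subseteq \Conv_X(B)$ is easier, since each generator lies between two elements of $B$ by strong convexity in the respective factor, and convexity is preserved under the group operations inside a convex subgroup (a standard fact about convex subgroups of left-ordered groups).

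The heart of the argument is therefore the convexity of $Y$ in $(X,<_X)$. Here I would exploit the explicit description of $<_X$ through the filtration $\mF_i(X)$ and the formulas (\ref{eqn:o1}), (\ref{eqn:oi+1}), together with the map $a:\mF_{0.5}(X)\to A$ used at each stage. The key observation is that the construction of $<_X^{(1)}$ restricts to the analogous construction of the isolated ordering on $Y = B_G *_B B_H$: assumption (a) holds for $(B_G,<_G|_{B_G})$ and $(B_H,<_H|_{B_H})$ by construction, assumption (b) (that $B$ is a stepping in $B_G$ and $B_H$) follows because $B$ is convex in each $B_\ast$ and hence $a(g) = \max\{b \in B \mid b \leq g\}$ still exists for $g \in B_G$ — indeed it equals $a(g)$ computed in $G$ provided that value lies in $B$, which it does since $g$ lies between two elements of $B$ — and assumption (c) is inherited from the fact that $<_G$ is isolated hence $<_G|_{B_G}$ is relatively isolated with respect to $B$, and similarly for $H$. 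Granting this, the filtration $\mF_i(Y) = \mF_i(X) \cap Y$ is exactly the analogous filtration of $Y$, and the ordering $<_i$ restricts correctly at each stage; so it suffices to show each $\mF_i(X) \cap Y$ is convex in $(\mF_i(X),<_i)$, which one proves by induction on $i$ using the insertion description: the elements of $\mF_{i+1}(X) \setminus \mF_i(X)$ lying in $Y$ are precisely those $g\Delta_y$ (or $h\Delta_y$) with $g \in B_G$ (resp.\ $h\in B_H$) and $y \in \mF_i(X) \cap Y$, and these get inserted between $gc_{i-1}(y)$ and $gc_{i-1}(y)h_{\sf min}$, both of which are already in $\mF_i(X)\cap Y$ by the inductive step and the fact that $c_{i-1}$ preserves $Y$ (this last point follows from \textbf{[P1]} and the observation that $a$ maps $\mF_{0.5}(X)\cap Y$ into $B$).

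I would handle the base of the induction, convexity of $\mF_{0.5}(X)\cap Y = B_G \cup B_H$ in $(\mF_{0.5}(X),<_{\sf base})$, directly from the four-case definition (\ref{eqn:base}): an element strictly between $h \in B_H$ and $g \in B_G$ is either in $G$, in which case it is $\leq_{\sf base}$-between $a(h)$ and $g$ hence (using $a(h) \in B$) in $B_G$ by strong convexity, or in $H$, handled symmetrically; and an element between two elements of $B_G$ (both in $G$) stays in $G$ hence in $B_G$ by convexity of $B_G$ in $G$, similarly for $B_H$. The final sentence of the proposition — that distinct $B, B'$ give distinct convex subgroups $\Conv_X(B) \neq \Conv_X(B')$ — is then immediate: $\Conv_X(B) \cap A = \Conv_A(B) = B$ (since $\Conv_X(B) = Y$ meets $A$ in $B_G \cap A = B$, and a convex subgroup of $X$ meets $A$ in a convex subgroup of $A$), so $B$ is recovered from $\Conv_X(B)$.

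\textbf{Main obstacle.} The delicate point is verifying that $Y = B_G *_B B_H$ really embeds in $X$ with $\mF_i(Y) = \mF_i(X) \cap Y$ — i.e.\ that the normal-form / filtration structure of $Y$ as an amalgam is compatible with that of $X$ — and, relatedly, that $c_{i-1}$ and the map $a$ send $Y$-elements to $Y$-elements rather than leaking outside $B_G, B_H$. This leak cannot happen precisely because of strong convexity (the relevant maxima, being sandwiched between two elements of $B$, land in $B$), so the argument is clean, but it is the place where the hypothesis is genuinely used and where care with the filtration bookkeeping is required.
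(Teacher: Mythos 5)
Your overall strategy --- realize $\Conv_X(B)$ as the sub-amalgam $Y=B_G*_BB_H$ and re-run the construction of Theorem \ref{theorem:main} inside it --- is quite different from the paper's, which never introduces $Y$ at all: the paper proves directly, by induction on the filtration level $N$, that $x\in\Conv_X(B)\cap\mF_N(X)$ implies $x\Conv_X(B)\subseteq\Conv_X(B)$, using the stepping property to factor $x=\bigl(ga(y)\bigr)\bigl(a(y)^{-1}y\bigr)$ with $ga(y)\in G\cap\Conv_X(B)$ and $a(y)^{-1}y\in\Conv_X(B)\cap\mF_{N-1}(X)$, so that everything reduces to the base case $x\in G\cup H$ where strong convexity applies. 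That argument is a few lines and needs none of the normal-form bookkeeping your route requires.

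As written, your proposal has two genuine gaps. First, the inclusion $Y\subseteq\Conv_X(B)$ is not a ``standard fact'': for a left-ordering, $b_1\leq x\leq b_2$ and $b_3\leq x'\leq b_4$ give $xb_3\leq xx'\leq xb_4$, but there is no right-invariance to bound $xb_3$ and $xb_4$ by elements of $B$ --- being sandwiched by $B$ is \emph{not} preserved under products in general; indeed, establishing exactly this kind of closure is the whole content of the proposition. (This direction can be repaired by invoking conclusion (4) of Theorem \ref{theorem:main} for $Y=B_G*_BB_H$: once $B$ is a stepping for the ordering on $Y$, every $y\in Y$ satisfies $a(y)\leq y<a_{+}(y)$ with $a(y),a_+(y)\in B$, whence $Y=\Conv_Y(B)\subseteq\Conv_X(B)$ --- but you do not make this move.) Second, in the inductive step for convexity of $\mF_{i+1}(X)\cap Y$ you only track where the $Y$-elements of $\mF_{i+1}(X)\setminus\mF_i(X)$ get inserted; convexity also requires the converse, namely that an element $z=gy'$ of $\mF_{i+1}(X)\setminus\mF_i(X)$ whose bracketing pair $gc_{i-1}(y')\prec_i gc_{i-1}(y')h_{\sf min}$ happens to lie in $Y$ must itself lie in $Y$ (equivalently, that $g$ can be taken in $B_G$). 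That step, together with the identities $\mF_i(Y)=\mF_i(X)\cap Y$ and $c_{i-1}(Y)\subseteq Y$, needs a normal-form argument you flag as the ``main obstacle'' but do not actually carry out. The plan is salvageable, but in its current form these are the places where the proof is missing, and the paper's direct induction shows they can be avoided entirely.
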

\begin{proof}

The case $B=\{1\}$ is trivial so we assume that $B \neq \{1\}$.
By induction on $N$, we prove that if $x \in \Conv_{X}(B) \cap \mF_{N}(X)$ then $xx' \in \Conv_{X}(B)$ for any $x' \in \Conv_{X}(B)$.

First assume that $x \in \mF_{0.5}(X) = G\cup H$.
For $x' \in \Conv_{X}(B)$, take $b \in B$ so that $b^{-1} <_{X} x' <_{X} b$. 
Then $xb^{-1} <_{X} xx' <_{X} xb$. Since $B$ is $(G,<_{G})$- and $(H,<_{H})$-strongly convex, $xb,xb^{-1} \in \Conv_{G}(B) \cup \Conv_{H}(B) \subset \Conv_{X}(B)$, hence $xx' \in \Conv_{X}(B)$.

To show general case, assume that $x \in \mF_{N}(X)-\mF_{N-1}(X)$ and put $x = gy$ $(g \in G-A, y \in \mF_{N-1}(X))$. We consider the case $N$ is odd, since the case $N$ is even is similar.

By Theorem \ref{theorem:main} (3), $A$ is a stepping so 
\[ a(y)= \max_{<_{X}}\{a \in A \: |\: a<_{X} y\} \]
exists. On the other hand, $x \in \Conv_{X}(B)$ so there exists $b \in B \subset A$ such that $b^{-1} <_{X} x <_{X} b$. By definition of $a(y)$, 
\[ b^{-1} \leq_{X} g a(y) <_{X} x <_{X} b\]
hence $ga(y) \in \Conv_{X}(B)$. We have assumed that $B$ is a non-trivial convex subgroup of $A$, so $a_{\sf min} \in B$. Since $1 <_{X} a(y)^{-1}y <_{X} a_{\sf min} $, $a(y)^{-1}y \in \Conv_{X}(B)$. By induction, $(a(y)^{-1}y)x' \in \Conv_{X}(B)$ if $x' \in \Conv_{X}(B)$. This shows that 
\[ x x'= (ga(y))(a(y)^{-1}y)x' \in \Conv_{X}(B) \]
as desired.
\end{proof}

We close the paper by giving new examples of isolated orderings obtained by Theorem \ref{theorem:main}.

\begin{example}
\label{exam:convex}
Let $B_{3}$ be the 3-strand braid group, given by 
\begin{eqnarray*}
 B_{3} & =  & \Z*_{\Z} \Z = \langle x,y \: | \: x^{2}=y^{3} \rangle\\
 & = & \left \langle 
\sigma_{1},\sigma_{2} \: | \: \sigma_{1}\sigma_{2}\sigma_{1}=\sigma_{2}\sigma_{1}\sigma_{2} \right\rangle .
\end{eqnarray*}
By Theorem \ref{theorem:main}, $B_{3}$ admits an isolated ordering $<_{DD}$, which is known as the Duborvina-Dubrovin ordering \cite{dd}.
The Dubrovina-Dubrovin ordering is discrete with minimum positive element $\sigma_{2}$. For $p>1$, let $A=A_p$ be the kernel of the mod $p$ abelianization map $e:B_{n} \rightarrow \Z_{p}$. Since for  $x \in B_{3}$
\[ \cdots \prec_{DD} x \sigma_{2}^{-1} \prec_{DD} x \prec_{DD} x \sigma_{2} \prec_{DD} x \sigma_{2}^{2} \prec_{DD} \cdots, \]
$A$ is a stepping with respect to $<_{DD}$: The maximum and minimum functions are given by
\[ a(x)=x \sigma_{2}^{-e(x)},\ \  a_{+}(x) = x \sigma_{2}^{p-e(x)} \ \ ( e(x) \in \{0,1,\ldots,p-1\}). \]

By Theorem \ref{theorem:main}, $X=X_p=B_{3}*_{A_p}B_{3}$ admits an isolated ordering $<_{X}$. The convex subgroup $B$ of $A$ generated by $\sigma_{2}^{p}$ is $(B_{3},<_{DD})$-strongly convex, hence by Proposition \ref{prop:convex}, $\Conv_{X}(B)$ is a non-trivial proper convex subgroup of $(X,<_{X})$.
$(X,<_{X})$ contains another non-trivial proper convex subgroup generated by the $<_{X}$-minimum positive elements, so $(X,<_{X})$ has at least two non-trivial proper convex subgroup. 
Iterating this kinds of constructions, starting from $\Z$ we are able to construct isolated ordering with arbitrary many proper non-trivial convex subgroups.
\end{example}

\end{document}